\documentclass{amsart}
	\usepackage[latin1]{inputenc}
	\usepackage{ae,aecompl,amsbsy,amssymb,amsmath,amsthm,
eurosym,amsfonts,epsfig,graphicx,graphics,verbatim,enumerate,esint,color,MnSymbol}
	
\addtolength{\topmargin}{-1.175in}
\addtolength{\textheight}{2.05in}
\addtolength{\oddsidemargin}{-.275in}
\addtolength{\textwidth}{0.65in}

\usepackage{color,soul}

\definecolor{lightblue}{rgb}{.85,.93,1}
\sethlcolor{lightblue}
\newcommand{\cd}{\mathcal{D}}
\newcommand{\cs}{\mathcal{S}}
\newcommand{\be}{\begin{equation*}}
\newcommand{\ee}{\end{equation*}}
\newcommand{\ba}{\begin{eqnarray*}}
\newcommand{\ea}{\end{eqnarray*}}

\pagestyle {plain}
\newcommand{\rn}{\mathbb{R}^d}

\newcommand{\rone}{\mathbb{R}}
\newcommand{\oh}{\frac{1}{2}}
\newcommand{\Q}{|Q|}

\newcommand{\normbee}[1]{\lVert#1\rVert_{B(E\to E)}}
\newcommand{\norm}[1]{\lVert#1\rVert_{E}}
\newcommand{\normnone}[1]{\lVert#1\rVert}
\newcommand{\te}{T}

\newcommand{\normlpwrnr}[1]{\lVert#1\rVert_{L_w^p(\rn\to\rone)}}

\newcommand{\normlpwv}[1]{\lVert#1\rVert_{L_w^p(\rn\to E)}}
\newcommand{\normlpwrne}[1]{\lVert#1\rVert_{L_w^p(\rn\to E)}}
\newcommand{\normltwowrne}[1]{\lVert#1\rVert_{L_w^2(\rn\to E)}}

\newcommand{\normlpsrnr}[1]{\lVert#1\rVert_{L_\sigma^p(\rn\to\rone)}}
\newcommand{\normlpsrne}[1]{\lVert#1\rVert_{L_\sigma^p(\rn\to E)}}

\newcommand{\normlpsrnlq}[1]{\lVert#1\rVert_{L_\sigma^p(\rn\to \ell^q)}}

\newcommand{\normlprne}[1]{\lVert#1\rVert_{L^p(\rn\to E)}}
\newcommand{\normlpwrnlq}[1]{\lVert#1\rVert_{L^p_w(\rn\to \ell^q)}}

\newcommand{\normlonerne}[1]{\lVert#1\rVert_{L^1(\rn\to E)}}

\newcommand{\normweaklprne}[1]{\lVert#1\rVert_{L^{p,\infty}(\rn\to E)}}
\newcommand{\normweaklonerne}[1]{\lVert#1\rVert_{L^{1,\infty}(\rn\to E)}}
\newcommand{\normltwowr}[1]{\lVert#1\rVert_{L_w^2(\rn\to\rone)}}

\newcommand{\abs}[1]{\lvert#1\rvert}
\newcommand{\measure}[1]{\lvert#1\rvert}

\newcommand{\charatwo}{[w]_{A_2}}
\newcommand{\charap}{[w]_{A_p}}

\newcommand{\lprnr}{L^p(\rn\to \rone)}
\newcommand{\lonernr}{L^1(\rn\to \rone)}

\newcommand{\lprne}{L^p(\rn\to E)}

\newcommand{\lpwrnr}{L^p_w(\rn\to \rone)}
\newcommand{\ltwowrnr}{L^2_w(\rn\to \rone)}

\newcommand{\lpsrne}{L^p_\sigma(\rn\to E)}

\newcommand{\lpsrnlq}{L^p_\sigma(\rn\to \ell^q)}

\newcommand{\lpwrne}{L^p_w(\rn\to E)}

\newcommand{\ltwowrne}{L^2_w(\rn\to E)}

\newcommand{\m}{c_\lambda(f;Q)}

\theoremstyle{plain}
\newtheorem{thm}{Theorem}[section]
\newtheorem{lemma}[thm]{Lemma}
\newtheorem{corollary}[thm]{Corollary}

\theoremstyle{definition}
\newtheorem{defn}[thm]{Definition}
\newtheorem*{notation}{Notation}

\newtheorem{lem}[thm]{Lemma}

\newcommand{\thesymbolforalpha}{\rho}
\theoremstyle{definition}

\theoremstyle{remark}
\newtheorem*{rem}{Remark}

\numberwithin{equation}{section}

\begin{document}

\title{The $A_2$ theorem and the local oscillation decomposition for Banach space valued functions}
\author{Timo S. H\"anninen}
\address{Department of Mathematics and Statistics, University of Helsinki, P.O. Box 68, FI-00014 HELSINKI, FINLAND}
\email{timo.s.hanninen@helsinki.fi}
\author{Tuomas P. Hyt\"onen}
\address{Department of Mathematics and Statistics, University of Helsinki, P.O. Box 68, FI-00014 HELSINKI, FINLAND}
\email{tuomas.hytonen@helsinki.fi}

\date{\today}
\subjclass[2010]{42B20, 46E40}

\thanks{Both authors are supported by the European Union through the ERC Starting Grant "Analytic-probabilistic methods for borderline singular integrals". T. P. H. is also supported by the Academy of Finland, grants 130166 and 133264.}
     
\keywords{Banach space, vector-valued, Calderon-Zygmund operator, Bochner space, local oscillation decomposition, Lerner's formula, Muckenhoupt weight, median, dyadic domination, A\_2, A2}

\begin{abstract}
We prove that the operator norm of every Banach space valued Calder\'{o}n-Zygmund operator $T$ on the weighted Lebesgue-Bochner space depends linearly on the Muckenhoupt $A_2$ characteristic of the weight. In parallel with the proof of the real-valued case, the proof is based on pointwise dominating every Banach space valued Calder\'{o}n-Zygmund operator by a series of positive dyadic shifts. In common with the real-valued case, the pointwise dyadic domination relies on Lerner's local oscillation decomposition formula, which we extend from the real-valued case to the Banach space valued case. The extension of Lerner's local oscillation decomposition formula is based on a Banach space valued generalization of the notion of median.
\end{abstract}
\maketitle
\section{Introduction}
In this paper we introduce a Banach space valued generalization of a median. Using the generalization of a median, we extend Lerner's local oscillation decomposition formula from real-valued functions to Banach space valued functions. As an application of the extension of Lerner's local oscillation decomposition formula, we prove that, in common with the real-valued case, every Banach space valued Calder\'{o}n-Zygmund operator is pointwise dominated by a series of positive dyadic shifts. As an immediate consequence of the pointwise dyadic domination, we obtain the $A_2$ theorem for Banach space valued Calder\'{o}n-Zygmund operators.

Let $A_2$ denote the class of all the weights with a finite Muckenhoupt $A_2$ characteristic $[w]_{A_2}$. The $A_2$ theorem states that for each Calder\'{o}n-Zygmund  operator $T$ with the H\"older exponent $\alpha\in(0,1]$ we have
\begin{equation*}
\label{a2theorem}
\normltwowr{Tf}\leq C_T\, \charatwo \normltwowr{f}\quad\text{for all $f\in\ltwowrnr$ and for all $w\in A_2$}.
\end{equation*}
The $A_2$ theorem in full generality was first proven  by Hyt\"{o}nen \cite{arXiv:1007.4330}. The result was preceded by many intermediate results by others. See \cite{arXiv:1202.2824} for a list of contributions to the $A_2$ theorem. The proof in \cite{arXiv:1007.4330} consists of two steps: The first step is to pointwise represent every Calder\'{o}n-Zygmund operator as a series (over complexity $k$) of dyadic shift operators (with complexity $k$) averaged over an infinite number of randomized dyadic grids. The second step is to obtain the $A_2$ estimate for the dyadic shift operators (with such a decay in complexity $k$ that the series converges). 

Hyt\"{o}nen, Lacey, and P\'{e}rez \cite{arXiv:1202.2229}, and Lerner \cite{arXiv:1202.2824} showed that every Calder\'{o}n-Zygmund operator is pointwise dominated by a series (over complexity $k$) of simple positive dyadic shift operators $A_{\cs,k}$ (with complexity $k$) summed over a finite number of translated dyadic grids (parameterized by $u\in\{0,\frac{1}{3},\frac{2}{3}\}^d$),
\begin{equation}
\label{pointwisedominationreal}
\abs{(Tf)(x)}\leq C_T \sum_{u\in\{0,\frac{1}{3},\frac{2}{3}\}^d}\sum_{k=0}^\infty 2^{-\alpha k}(A_{\cs^u_k,k} \abs{f})(x) \quad\text{for a.e. $x\in\rn$}.
\end{equation}
This result simplifies the first proof of the $A_2$ theorem \cite{arXiv:1007.4330}, because the pointwise domination \eqref{pointwisedominationreal} is simpler than the representation theorem in \cite{arXiv:1007.4330} and because the $A_2$ estimate is obtained more simply for the operator $A_{\cs,k}$ than for a general dyadic shift operator.

Moreover, Lerner \cite{arXiv:1202.1860} proved that the formal adjoint $A_{\cs,k}^\star$ of each operator $A_{\cs,k}$ is pointwise dominated (linearly in complexity $k$) by the operator $A_{\cs,k=0}$. Hence, by duality and the self-adjointness of $A_{\cs,k=0}$, the $A_2$ estimate for the operator $A_{\cs,k}$ follows from the $A_2$ estimate for the operator $A_{\cs,k=0}$, as shown in \cite{arXiv:1202.1860}. This result simplifies further the proof of the $A_2$ theorem, because the $A_2$ estimate for the operator $A_{\cs,k=0}$ is simple to obtain, as shown in \cite[The proof of Theorem 1]{cruz-uribe2010}. See \cite{arXiv:1202.2824} for a self-contained proof of the $A_2$ theorem based on the simplifications mentioned. Both of the results on domination \cite{arXiv:1202.2229,arXiv:1202.2824} and \cite{arXiv:1202.1860} are based on Lerner's local oscillation decomposition formula \cite{lerner2010,arXiv:1202.1860}.

In our paper we extend the results discussed in the preceding paragraphs from real-valued functions to Banach space valued functions.
In what follows we summarize the results in an informal manner.  The results together with the definitions are stated formally in Section 2. Let $(E,\norm{\,\cdot\,})$ be a Banach space. Suppose that $T$ is an $E$-valued Calder\'{o}n-Zygmund operator with the H\"older exponent $\alpha\in(0,1]$ on the Lebesgue-Bochner space $\lprne$. Assume that $f:\rn \to E$ is a Bochner measurable function.

In this paper we prove that, in common with the real-valued case, for each $E$-valued Calder\'{o}n-Zygmund operator we have the pointwise dyadic domination theorem
\begin{equation*}
\label{pointwisedominationvector}
\norm{(Tf)(x)}\leq C_T \sum_{u\in\{0,\frac{1}{3},\frac{2}{3}\}^d}\sum_{k=0}^\infty 2^{-\alpha k}(A_{\cs^u_k,k} \norm{f})(x)  \quad\text{for a.e. $x\in\rn$},
\end{equation*}
and, as a corollary, the $A_2$ theorem
$$
\normltwowrne{Tf}\leq C_T\,  [w]_{A_2} \normltwowrne{f}\quad\text{for all $f\in\ltwowrne$ and for all $w\in A_2$}.
$$

Once we have an $E$-valued generalization of Lerner's local oscillation decomposition formula, the proof of the $E$-valued dyadic domination theorem proceeds in parallel with the proof of the real-valued dyadic domination theorem. The difficulty in extending Lerner's formula from real-valued functions to $E$-valued functions is that the formula is derived using the notion of a median, notion which is based on the ordering of the real line. We circumvent the difficulty by introducing an $E$-valued generalization of a median, which we call {\it a quasi-optimal center of oscillation} and denote by $c_\lambda(f,Q)$. By using the notion of a quasi-optimal center of oscillation, we extend Lerner's local oscillation formula to $E$-valued functions,
$$
1_{Q^0}(x)\norm{f(x)-c_{1/4}(f;Q^0)}\leq 12\sum_{Q\in\cs }\omega_{2^{-d-3}}(f;Q)1_{Q}(x)\quad\text{for a.e. $x\in\rn$}.
$$ 

We note that two-weight norm inequalities of the form
\begin{equation*}\normlpwrnlq{Nf}\leq C_{N,w,\sigma}\normlpsrnlq{f}\quad \text{for all $f\in\lpsrnlq$}
\end{equation*}
were studied in \cite[Section 8]{cruz-uribe2012} for a $\ell^q$-valued maximal operator and in \cite{scurry2010} for another $\ell^q$-valued operator. The setting in \cite[Section 8]{cruz-uribe2012} and \cite{scurry2010} differs from ours, because neither the operator studied in \cite[Section 8]{cruz-uribe2012}  nor the operator studied in \cite{scurry2010} is (albeit each one is similar to) a $\ell^q$-valued Calder\'{o}n-Zygmund operator and because instead of $\ell^q$-valued functions we study $E$-valued functions for an abstract Banach space $E$.

Our paper is organized as follows. In Section 2 we first introduce the setting along with the notation. Then we state the pointwise dyadic domination theorem, Theorem \ref{introductiondyadicdomination}, and the $A_2$ theorem, Corollary \ref{vvapt},  for Banach space valued Calder\'{o}n-Zygmund operators. We conclude Section 2 by defining the notion of a quasi-optimal center of oscillation and by stating the Banach space valued generalization of Lerner's local oscillation decomposition formula, Theorem \ref{lernersformula}. In Section 3 we prove, assuming the generalization of Lerner's formula, the pointwise dyadic domination theorem and the $A_2$ theorem for Banach space valued Calder\'{o}n-Zygmund operators. In Section 4 we prove the generalization of Lerner's formula.

\section{Vector-valued setting and the main theorems}
The material from Definition \ref{bochnermeasurability} to Definition \ref{vvczo} consists of defining the notions of a vector-valued Lebesgue-Bochner space, of a vector-valued Calder\'{o}n-Zygmund operator, and of a Muckenhoupt weight. The reader familiar with these notions may prefer to move on to Definition \ref{nearlydisjoint}.

\begin{notation} Let $(E,\norm{\,\cdot\,})$ be a Banach space. Denote by $B(E\to E)$ the space of bounded linear operators from $E$ to $E$, and denote by $\normbee{\,\cdot\,}$ the usual operator norm. Let $(\rn,\mathcal{L}(\rn),\measure{\,\cdot\,})$ denote the Lebesgue measure space. Denote by $B_E(c,r)$ the closed ball  with center $c\in E$ and radius $r>0$ in $E$. Let $\norm{f}$ denote the function $\rn\ni x\rightmapsto \norm{f(x)}\in[0,\infty)$.

Suppose that $A$ and $B$ are sets. Let $g:A\times B\to[0,\infty]$ and $h:A\times B\to[0,\infty]$ be functions. The notation "$g(a,b)\lesssim_b h(a,b)$" and the notation "$g(a,b)\lesssim h(a,b)$ for all $a\in A$" both mean that for each $b\in B$ there exists a constant $C_b>0$ such that $g(a,b)\leq C_b\; h(a,b)$ for all $a\in A$.
\end{notation}

\begin{defn}[Bochner measurability]\label{bochnermeasurability} 

A function $f:\rn\to E$ is called {\it (Lebesgue) measurable}, if and only if $f^{-1}(B)\in\mathcal{L}(\rn)$ for every Borel set $B$ of $E$. 

A function $f:\rn\to E$ is called {\it essentially separably valued (with respect to the Lebesgue measure space)}, if and only if there exist a set $N\in\mathcal{L}(\rn)$ of measure zero $\measure{N}=0$ such that the image $f(\rn\setminus N)$ of the complement $\rn\setminus N$ of $N$ is separable. 

A function $f:\rn\to E$ is called {\it strongly measurable (with respect to the Lebesgue measure space)} or {\it Bochner measurable (with respect to the Lebesgue measure space)}, if and only if it is both essentially separably valued and Lebesgue measurable. 

\end{defn}
\begin{defn}[Weight function and weight measure]
A locally integrable function $w:\rn\to(0,\infty)$ is called {\it a weight function}. A weight function $w:\rn\to(0,\infty)$  gives rise to {\it the weight measure} $w:\mathcal{L}(\rn)\to[0,\infty]$ by setting
$$
w(A):=\int_{A} w(x)\,\mathrm{d}x\quad\text{for each $A\in \mathcal{L}(\rn)$.}
$$
\end{defn}
\begin{defn}[Weighted and unweighted Lebesgue-Bochner space] {\it The Lebesgue-Bochner space}, denoted by $\lprne$, is defined as
$$
\lprne:=\{f:\rn\to E \,|\, f\text{ is Bochner measurable and } \left(\int\norm{f(x)}^p\, \mathrm{d} x\right)^{1/p}  < \infty\}.$$
We denote $\normlprne{f}:=(\int \norm{f(x)}^p\, \mathrm{d} x)^{1/p}$.

Let $w$ be a weight.   {\it The weighted Lebesgue-Bochner space}, denoted by $L^p_w(\rn\to E)$, is defined as
$$
L^p_w(\rn \to E):=\{f:\rn\to E \,|\, f\text{ is Bochner measurable and } \left(\int\norm{f(x)}^p\,w(x)\, \mathrm{d} x\right)^{1/p}  < \infty\}.$$
We denote $\normlpwv{f}:=(\int \norm{f(x)}^p\,w(x)\, \mathrm{d} x)^{1/p}$.
\end{defn}
\begin{defn}[Muckenhoupt weights]Let $M$ denote the Hardy-Littlewood maximal function. Suppose that $w$ is a weight function. 
Define {\it the dual weight function of $w$}, denoted by $\sigma_{w,p}$, by setting $\sigma_{w,p}(x):=w(x)^{-1/(p-1)}$ for each $x\in\rn$. We define the auxiliary quantities 

$$A_p(w;Q):=\frac{w(Q)}{\measure{Q}}\left(\frac{\sigma_{w,p}(Q)}{\measure{Q}}\right)^{p-1}\quad \text{for }p\in(1,\infty)
$$
and
$$
A_\infty(w;Q):=\frac{1}{w(Q)}\int_{Q}M(w1_Q).
$$
For $p\in(1,\infty]$ we define {\it the Muckenhoupt $A_p$ characteristic, denoted by $[w]_{A_p}$, of a weight $w$} by setting
$$
[w]_{A_p}:=\sup_{\text{all cubes } Q} A_p(w;Q),
$$
and we define {\it the Muckenhoupt's $A_p$ class}, denoted by $A_p$, as
$$
A_p:=\{w:\rn\to(0,\infty) \;| \;w \text{ is a weight and } [w]_{A_p}<\infty\}.
$$

\end{defn}
\begin{defn}[Vector-valued singular kernel] A function $K:\rn\times\rn\setminus \{(x,x):x\in \rn\} \to B(E\to E)$  is called {\it a singular kernel}, if and only if
\begin{enumerate}
\item[(i)]The function $K$ obeys the decay estimate $$\normbee{K(x,y)}\lesssim \frac{1}{\abs{x-y}^d}\quad\text{whenever }x\neq y.$$
\end{enumerate}

\begin{enumerate}
\item[(ii)]The function $K$ obeys the H\"older-type estimates  $$\normbee{K(x,y)-K(x',y)}\lesssim \left(\frac{\abs{x-x'}}{\abs{x-y}}\right)^\alpha \frac{1}{\abs{x-y}^d}\quad\text{whenever }0<\abs{x-x'}<\frac{1}{2}\abs{x-y}$$
and
$$\normbee{K(x,y)-K(x,y')}\lesssim \left(\frac{\abs{y-y'}}{\abs{x-y}}\right)^\alpha \frac{1}{\abs{x-y}^d}\quad\text{whenever }0<\abs{y-y'}<\frac{1}{2}\abs{x-y}$$
for some H\"older exponent $0<\alpha\leq 1$.
\end{enumerate}

\end{defn}

\begin{defn}[Vector-valued Calder\'{o}n-Zygmund operator]\label{vvczo} Let $1<p<\infty$. A linear operator $T:\lprne\to\lprne$ is called {\it a vector-valued Calder\'{o}n-Zygmund operator}, if and only if
\begin{enumerate}
\item[(i)] $T:\lprne\to\lprne$ is bounded.
\item[(ii)] There exists a singular kernel $K:\rn\times\rn\setminus \{(x,x):x\in \rn\} \to B(E\to E)$ such that 
$$
Tf(x)=\int K(x,y)f(y)\,\mathrm{d}x
$$
for every strongly measurable, bounded, and compactly supported function $f:\rn\to E$ and for every $x$ that lies outside the support of $f$.
\end{enumerate}
\end{defn}
\begin{rem}We include the condition (i) as a part of the definition of an $E$-valued Calder\'{o}n-Zygmund operator. In case of many classes of operators the condition (i) is checked by using theorems such as \cite[Theorem 5]{bourgain1983b}, an $E$-valued $T1$ theorem \cite{figiel1989}, an $E$-valued $Tb$ theorem \cite{hytonen2008}, or an operator-valued Fourier multiplier theorem \cite{weis2001}. These theorems presume that $E$ has the UMD-property, which means that $E$-valued martingale difference sequences are unconditional in $\lprne$. Moreover, in the case of the Hilbert-transform, which is a prototype of a singular integral operator, for the fulfilment of the condition (i) it is not only sufficient \cite{burkholder1981} but also necessary \cite{bourgain1983} that the Banach-space $E$ has the UMD-property. 

\end{rem}
Next we define the dyadic model operators $A_{\cs,k}$ that dominate each vector-valued Calder\'{o}n-Zygmund operator. The operators $A_{\cs,k}$ are precisely the same dyadic model operators that dominate each Calder\'{o}n-Zygmund operator in the real-valued case \cite{arXiv:1202.2229}. 
\begin{defn}[Pairwise nearly disjoint collection]\label{nearlydisjoint}Let $0<\nu<1$. A collection $\cs$ of measurable sets is called {\it pairwise nearly disjoint (with the parameter $\nu$)}, if and only if
\begin{enumerate}
\item[(i)] For every $Q\in \cs$ there exists a measurable subset $E(Q)\subset Q$ such that $\measure{E(Q)}\geq\nu\measure{Q}$.
\item[(ii)] For every $Q\in \cs$ and $Q'\in \cs$ such that $Q\neq Q'$ we have $E(Q)\cap E(Q')=\emptyset$. 
\end{enumerate}
\end{defn}

\begin{defn}[Dyadic model operator $A_{\cs,k}$]\label{dyadicmodeloperators}Let $\cs$ be a collection of dyadic cubes. Let $Q^{(k)}$ denote the $k$:th ancestor of a dyadic cube $Q$. We define the dyadic model operator $A_{\cs,k}$ by 

$$
A_{\cs,k}g:=\sum_{Q\in\cs} 1_Q \fint_{Q^{(k)}} g \quad\text{for every Lebesgue measurable function $g:\rn\to[0,\infty)$}.
$$
\end{defn}
Recall that for each translation parameter $u\in\{0,\frac{1}{3},\frac{2}{3}\}^d$ we have the translated dyadic system 
$$\cd^u:=\{2^{-j}([0,1)^d+m+(-1)^ju) : j\in\mathbb{Z}, m\in\mathbb{Z}^d\}.$$
Next we state our main theorem.
\begin{thm}[Pointwise dyadic domination theorem for vector-valued Calder\'{o}n-Zygmund operators]\label{introductiondyadicdomination} Suppose that $f:\rn\to E$ is a strongly measurable, bounded, and compactly supported function. Let $Q^0$ be a cube that contains the support of $f$. Suppose that $T$ is a vector-valued Calder\'{o}n-Zygmund operator with the H\"older exponent $\alpha\in(0,1]$. 

Then for each translated dyadic system  $u\in\{0,\frac{1}{3},\frac{2}{3}\}^d$ and for each $k\in\mathbb{N}$ there exists a collection $\cs^u_k$ of dyadic cubes such that the collection $\cs^u_k$ is pairwise nearly disjoint and for almost every $x\in\rn$ we have
$$
1_{Q^0}(x)\norm{(Tf)(x){\color{red}}} \lesssim \sum_{u\in\{0,\frac{1}{3},\frac{2}{3}\}^d}\sum_{k=0}^{\infty} 2^{-\alpha k} (A_{\cs^u_k,k}\norm{f})(x).
$$
The implicit constant in the inequality depends only on the dimension $d$ and on the constants that are implicit in the definition of a vector-valued Calder\'{o}n-Zygmund operator. Each collection $\cs^u_k$ depends on $T$ and $f$.

\end{thm}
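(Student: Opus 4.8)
The plan is to follow the scheme of the real-valued pointwise domination proof of Hyt\"onen--Lacey--P\'erez and Lerner, and reduce everything to the $E$-valued version of Lerner's local oscillation decomposition formula (Theorem \ref{lernersformula}). The first step is a Calder\'on--Zygmund-type decomposition of the oscillation: for the fixed cube $Q^0\supset\supp f$, I want to control the quantities $\omega_{2^{-d-3}}(Tf;Q)$ appearing in Lerner's formula applied to the function $Tf$ on $Q^0$. Splitting $f=f1_{3Q}+f1_{(3Q)^c}$ for a cube $Q\subset Q^0$, the local part $Tf1_{3Q}$ is handled by the weak-$(1,1)$ boundedness of $T$ (which follows from the kernel estimates and the $L^p$-boundedness via the standard $E$-valued Calder\'on--Zygmund theory, exactly as in the scalar case, since the argument only uses the norm $\norm{\cdot}$ of values), giving $\omega_\lambda(Tf1_{3Q};Q)\lesssim \fint_{3Q}\norm{f}$; the far part $Tf1_{(3Q)^c}$ has oscillation controlled by the H\"older kernel estimate, $\osc_{x,x'\in Q}\norm{Tf1_{(3Q)^c}(x)-Tf1_{(3Q)^c}(x')}\lesssim \sum_{j\ge 1}2^{-\alpha j}\fint_{3^{j+1}Q}\norm{f}$, again word for word as in the real case because the kernel H\"older bound is an operator-norm bound. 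Combining, $\omega_{2^{-d-3}}(Tf;Q)\lesssim \sum_{j\ge 0}2^{-\alpha j}\fint_{3^{j+1}Q}\norm{f}$.

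The second step is to insert this into the $E$-valued Lerner formula. By Theorem \ref{lernersformula} applied to $g=Tf$ on $Q^0$ there is a sparse (pairwise nearly disjoint) collection $\cs\subset\cd(Q^0)$ with
\[
1_{Q^0}(x)\norm{Tf(x)-c_{1/4}(Tf;Q^0)}\le 12\sum_{Q\in\cs}\omega_{2^{-d-3}}(Tf;Q)1_Q(x)\quad\text{a.e.}
\]
The constant term $\norm{c_{1/4}(Tf;Q^0)}$ is dominated by $\fint_{Q^0}\norm{Tf}+\omega_{1/4}(Tf;Q^0)$, hence by the weak-$(1,1)$ bound again by $\lesssim \fint_{Q^0}\norm{f}$ (here one uses $\supp f\subset Q^0$), which is itself $\lesssim (A_{\cs^0_0,0}\norm{f})(x)$ for $x\in Q^0$ once $Q^0$ is taken as (a cube inside) one of the dyadic grids. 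So the whole left side is bounded by $\sum_{Q\in\cs}\big(\sum_{j\ge 0}2^{-\alpha j}\fint_{3^{j+1}Q}\norm f\big)1_Q$ plus the acceptable constant term.

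The third step is the by-now-standard ``three dyadic grids'' trick: for each cube $3^{j+1}Q$ with $Q$ dyadic in the grid $\cd$, there is a translation parameter $u\in\{0,\tfrac13,\tfrac23\}^d$ and a dyadic cube $R\in\cd^u$ with $3^{j+1}Q\subset R$ and $\ell(R)\le C\,3^{j}\ell(Q)$, i.e.\ $R=Q'^{(k)}$ for the appropriate $k\simeq j$ and some $Q'\in\cd^u$ with $Q'\supset Q$; thus $\fint_{3^{j+1}Q}\norm f\lesssim \fint_{Q'^{(k)}}\norm f$. Organizing the cubes $Q'$ that arise this way (for each fixed $u$ and $k$) into the collection $\cs^u_k$, one checks these are pairwise nearly disjoint because the original $\cs$ is sparse and the map $Q\mapsto Q'$ is, up to bounded multiplicity, injective on each level — this is the same bookkeeping as in \cite{arXiv:1202.2229}. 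Re-indexing $j$ by $k$ and absorbing the finitely many $u$'s, the double sum becomes $\sum_u\sum_{k\ge 0}2^{-\alpha k}(A_{\cs^u_k,k}\norm f)(x)$, which is the claim.

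I expect the only genuine obstacle to be conceptual rather than technical: the classical argument uses the \emph{median} of $Tf$ as the centering constant and the real-valued Lerner formula; the whole point of the paper is that replacing the median by the quasi-optimal center of oscillation $c_\lambda(f;Q)$ and using Theorem \ref{lernersformula} makes every subsequent step go through verbatim, since from that point on only the scalar function $\norm{f}$ and operator-norm kernel bounds enter. The two places needing a small check are (a) that the $E$-valued Calder\'on--Zygmund operator is of weak type $(1,1)$ with a bound depending only on the admissible constants (standard: the scalar CZ proof applies since $\norm{Tf}$, $\norm{K(x,y)}_{B(E\to E)}$ play the roles of $|Tf|$, $|K(x,y)|$, and the $L^p$-boundedness hypothesis (i) provides the starting estimate), and (b) the pairwise-near-disjointness of the produced collections $\cs^u_k$, which is a direct consequence of the sparseness in Theorem \ref{lernersformula} together with the bounded overlap in the grid-shifting lemma. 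Neither requires any new vector-valued input.
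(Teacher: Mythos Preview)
Your approach is essentially the paper's own: apply the vector-valued Lerner formula (Theorem~\ref{lernersformula}) to $Tf$ on $Q^0$, bound each $\omega_\lambda(Tf;Q)$ by the near/far splitting and weak-$(1,1)$ (this is exactly Lemma~\ref{oscillationcalderon}, with dilation factor $2$ in place of your $3$), and then invoke the three-grids lemma to replace the concentric averages $\fint_{2^kQ}\norm{f}$ by dyadic ones and organize the resulting cubes into nearly disjoint collections $\cs^u_k$.

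One small slip in your handling of the centering constant: the bound $\norm{c_{1/4}(Tf;Q^0)}\lesssim \fint_{Q^0}\norm{Tf}+\omega_{1/4}(Tf;Q^0)$ is correct, but the next step fails --- weak-$(1,1)$ does \emph{not} control $\fint_{Q^0}\norm{Tf}$ by $\fint_{Q^0}\norm{f}$ (Calder\'on--Zygmund operators need not map $L^1$ into $L^1$ locally with such a bound). The paper instead bypasses the $L^1$ average and uses the rearrangement directly: by inequality~\eqref{3rpropertyb} one has $\norm{c_\lambda(Tf;Q^0)}\le 3\,(\norm{Tf}1_{Q^0})^*(\lambda\abs{Q^0})$, and then $(\norm{Tf}1_{Q^0})^*(\lambda\abs{Q^0})\le (\lambda\abs{Q^0})^{-1}\normweaklonerne{Tf}\lesssim \fint_{Q^0}\norm{f}$ by weak-$(1,1)$ and $\supp f\subset Q^0$. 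With this correction your outline matches the paper's proof line for line.
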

The proof of the theorem is deferred to Section \ref{section:dyadicdomination}. We can use the theorem as a method to transfer results about real-valued model operators $A_{\cs,k}$ into results about vector-valued Calder\'{o}n-Zygmund operators. In regard to estimating the $\normlpwrne{\,\cdot\,}$-norm, note that by definition if $f\in\lpwrne$, then $\norm{f}\in\lpwrnr$ and $$\normlpwrnr{(\norm{f})}=\normlpwrne{f}.$$ Hence by Theorem \ref{introductiondyadicdomination} for each $f\in\lpwrne$ we have that
\begin{equation*}
\label{substitution2}
\normlpwrne{1_{Q^0}Tf} \lesssim \sum_{u\in\{0,\frac{1}{3},\frac{2}{3}\}^d}\sum_{k=0}^{\infty} 2^{-\alpha k}\normlpwrnr{ (A_{\cs^u_k,k}\norm{f})}.
\end{equation*}
Now by using a known estimate for $\normnone{A_{\cs,k}}_{B\left(\lpwrnr\to\lpwrnr\right)}$ we obtain the following corollary, the proof of which is deferred to Section \ref{section:dyadicdomination}.
\begin{corollary}[$A_p$ theorem for vector-valued Calder\'{o}n-Zygmund operators]\label{vvapt}Let $(E,\norm{\,\cdot\,})$ be a Banach space.  Suppose that $T$ is a vector-valued Calder\'{o}n-Zygmund operator with the H\"older exponent $\alpha\in(0,1]$. Then we have that 
\begin{equation}\label{extrapolation}
\normlpwrne{Tf}\lesssim \charap^{\max \{1,1/(p-1)\}}\normlpwrne{f}.
\end{equation}
for all $f\in\lpwrne$ and for all $w\in A_p$. The implicit constant in the inequality depends only on the dimension $d$ and on the constants that are implicit in the definition of a vector-valued Calder\'{o}n-Zygmund operator.
\end{corollary}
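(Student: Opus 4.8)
\textbf{Proof proposal for Corollary \ref{vvapt}.}

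The plan is to deduce the corollary from the pointwise dyadic domination theorem, Theorem~\ref{introductiondyadicdomination}, by transferring the known weighted bound for the scalar dyadic model operators $A_{\cs,k}$. First I would reduce to the case that $f$ is strongly measurable, bounded, and compactly supported. Such functions lie in $\lprne\cap\lpwrne$ (the weight being locally integrable) and are dense in $\lpwrne$; hence, once \eqref{extrapolation} is established for them with the asserted constant, it extends by density to a bounded operator on $\lpwrne$, which one checks agrees with $T$ wherever both are defined. Moreover, for $f$ bounded and compactly supported we have $f\in\lprne$, so $Tf\in\lprne$ is a genuine Bochner-measurable function and $\norm{Tf}$ is a well-defined nonnegative measurable function.

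Fix such an $f$ and a cube $Q^0$ containing $\supp f$, and apply Theorem~\ref{introductiondyadicdomination}: for each $u\in\{0,\tfrac13,\tfrac23\}^d$ and each $k\in\mathbb{N}$ there is a collection $\cs^u_k$ of dyadic cubes, pairwise nearly disjoint with a fixed parameter $\nu=\nu(d)$, such that $1_{Q^0}\norm{Tf}\lesssim\sum_{u}\sum_{k\ge0}2^{-\alpha k}A_{\cs^u_k,k}\norm{f}$ a.e. Taking the $\normlpwrnr{\,\cdot\,}$-norm of both sides, using the triangle and monotonicity properties of this norm together with the identities $\normlpwrnr{\norm{g}}=\normlpwrne{g}$, I obtain
\[
\normlpwrne{1_{Q^0}Tf}\lesssim\sum_{u\in\{0,\frac13,\frac23\}^d}\sum_{k=0}^{\infty}2^{-\alpha k}\,\normnone{A_{\cs^u_k,k}}_{B(\lpwrnr\to\lpwrnr)}\,\normlpwrne{f}.
\]
Now I invoke the known scalar estimate $\normnone{A_{\cs,k}}_{B(\lpwrnr\to\lpwrnr)}\lesssim_{d,p}(1+k)\,\charap^{\max\{1,1/(p-1)\}}$, valid for any collection $\cs$ of dyadic cubes that is pairwise nearly disjoint with parameter $\nu(d)$ (see \cite{arXiv:1202.2229,arXiv:1202.1860}). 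Since $\#\{0,\tfrac13,\tfrac23\}^d=3^d$ and $\sum_{k\ge0}2^{-\alpha k}(1+k)<\infty$ because $\alpha>0$, the double series converges and yields $\normlpwrne{1_{Q^0}Tf}\lesssim\charap^{\max\{1,1/(p-1)\}}\normlpwrne{f}$ with a constant depending only on $d$, $p$, $\alpha$ and the kernel constants — crucially, \emph{independently} of $Q^0$ and of the families $\cs^u_k$.

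Finally I would remove the truncation. Choose an increasing sequence of cubes $Q^0_N$, each containing $\supp f$, with $\bigcup_N Q^0_N=\rn$. The previous bound holds for every $Q^0_N$ with the same constant, while $1_{Q^0_N}(x)\norm{Tf(x)}\uparrow\norm{Tf(x)}$ pointwise; by monotone convergence, $\normlpwrne{Tf}=\lim_N\normlpwrne{1_{Q^0_N}Tf}\lesssim\charap^{\max\{1,1/(p-1)\}}\normlpwrne{f}$, which in particular shows $Tf\in\lpwrne$. Combined with the density step, this proves \eqref{extrapolation}. The only non-formal ingredient is the scalar weighted bound for $A_{\cs,k}$ with a dependence on the complexity $k$ mild enough to be summed against $2^{-\alpha k}$; everything else is the bookkeeping of taking norms, summing a convergent series, and the two standard limiting arguments (truncation removal and density extension). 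Consequently, once Theorem~\ref{introductiondyadicdomination} is in hand, I do not anticipate a serious obstacle here — the substance of the paper lies in that theorem and in the vector-valued Lerner formula behind it.
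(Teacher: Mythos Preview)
Your proposal is correct and follows essentially the same route as the paper: reduce to bounded compactly supported $f$, apply the pointwise dyadic domination, take $L^p_w$-norms, invoke the scalar weighted bound $\normnone{A_{\cs,k}}\lesssim(1+k)\charap^{\max\{1,1/(p-1)\}}$, sum the convergent series $\sum_k 2^{-\alpha k}(1+k)$, and remove the truncation by monotone convergence. The only cosmetic difference is that the paper first establishes a two-weight estimate (citing \cite{arXiv:1106.4797} for the shift bound) and then specializes to $\sigma=w^{1/(1-p)}$, whereas you go directly to the one-weight inequality; the underlying argument is the same.
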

\begin{rem}By the sharp version of Rubio de Francia's extrapolation theorem \cite{grafakos2005}, version which extends with the same proof for Banach space valued functions, we have that the weighted norm estimate \eqref{extrapolation} for $p=2$ implies the weighted norm estimate \eqref{extrapolation} for every $p\in(1,\infty)$. However, we prove the weighted norm estimate \eqref{extrapolation} for every $p\in(1,\infty)$ directly, without using the sharp version of Rubio de Francia's extrapolation theorem. \end{rem}

The dyadic domination theorem for real-valued Calder\'{o}n-Zygmund operators \cite{arXiv:1202.2229} is based on Lerner's local oscillation decomposition formula \cite{lerner2010,arXiv:1202.1860}. Similarly, the dyadic domination theorem for vector-valued Calder\'{o}n-Zygmund operators is based on a vector-valued generalization of Lerner's local oscillation decomposition formula, to which we turn next. First we recall the notion of a local oscillation.

\begin{defn}[Local oscillation $\omega_\lambda(f;Q)$]Let $f:\rn\to E$ be a Lebesgue measurable function. Suppose that $Q\subset \rn$ is a Lebesgue measurable set. Let $0<\lambda<\frac{1}{2}$. {\it The local oscillation} or {\it the optimal oscillatory bound of $f$ on $Q$ with a $\lambda$-portion disregarded}, denoted by $\omega_\lambda(f;Q)$, is defined as

$$\omega_\lambda(f;Q):=\inf_{c\in E} \left( \min\left\{r\geq 0 : \frac{\measure{\{x\in Q : f(x)\notin B_E(c,r)\}}}{\measure{Q}}\leq\lambda\right\}\right).$$
\end{defn}
\begin{rem}Recall that {\it the decreasing rearrangement of $f$}, denoted by $\norm{f}^*$, is defined as $\norm{f}^*(t):=\min \{r\geq 0: \measure{\{x\in\rn : \norm{f(x)}>r\}}\leq t\}$. Note that $\omega_\lambda(f;Q):=\inf_{c\in E}\left(\norm{f-c}1_Q\right)^*(\lambda\measure{Q})$.

\end{rem}

Next we formulate the definition of a quasi-optimal center of oscillation, which is a vector-valued counterpart of a median. In Section \ref{sec:lernersformula} we show that there always exists a quasi-optimal center of oscillation and discuss the idea behind the notion.

\begin{defn}[Quasi-optimal center of oscillation $c_\lambda(f;Q)$]\label{oco}
\label{optimalcenterofoscillation} Let $f:\rn\to E$ be a Lebesgue measurable function. Suppose that $Q\subset \rn$ is a Lebesgue measurable set. Let $0<\lambda<\frac{1}{2}.$
{\it A quasi-optimal center of oscillation of $f$ on $Q$ with a $\lambda$-portion disregarded} or {\it a $\lambda$-pseudomedian of $f$ on $Q$}, denoted by $c_\lambda(f;Q)$, is defined as any vector $c\in E$ such that
$$\min\left\{r\geq 0 : \frac{\measure{\{x\in Q : f(x)\notin B_E(c,r)\}}}{\measure{Q}}\leq\lambda\right\}\leq 2\omega_\lambda(f;Q).
$$
\end{defn}
Endowed with the vector-valued generalization of a median, we now state the vector-valued generalization of Lerner's local oscillation decomposition formula.

\begin{thm}[Vector-valued generalization of Lerner's local oscillation decomposition formula]\label{lernersformula} Suppose that $(E,\norm{\,\cdot\,})$ is a Banach space. Let $\nu\in(0,1)$. 
Suppose that $f:\rn\to E$ is a strongly measurable function. Let $Q^0$ be a cube. 

Then there exists a collection $\cs$ of dyadic subcubes of $Q^0$  such that the collection $\cs$ is pairwise nearly disjoint with the parameter $\nu$ and for almost every $x\in Q^0$ and for every quasi-optimal center of oscillation $c_{1/4}(f;Q^0)$ we have
\be
\norm{f(x)-c_{1/4}(f;Q^0)}\leq 12\sum_{Q\in\cs}\omega_{(1-\nu)2^{-d-2}}(f;Q)1_{Q}(x).
\ee
\end{thm}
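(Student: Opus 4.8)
The plan is to reduce the vector-valued statement to the scalar Lerner decomposition by a stopping-time / Calderón-Zygmund selection argument run directly on $f:\rn\to E$, using the quasi-optimal center of oscillation in place of the median. First I would fix a quasi-optimal center $c^0:=c_{1/4}(f;Q^0)$ and build a collection of stopping cubes inside $Q^0$ by the usual recursive procedure: starting from $Q^0$, one selects the maximal dyadic subcubes $Q$ of $Q^0$ for which the ``oscillation relative to the parent's center'' is large — concretely, the maximal $Q$ such that
\be
\frac{\measure{\{x\in Q:\norm{f(x)-c_{\widehat Q}}>2\,\omega_{(1-\nu)2^{-d-2}}(f;\widehat Q)\}}}{\measure{Q}}>\nu,
\ee
where $\widehat Q$ is the stopping parent of $Q$ and $c_{\widehat Q}:=c_{1/4}(f;\widehat Q)$. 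Here is where the threshold $(1-\nu)2^{-d-2}$ (rather than $1/4$) enters: passing from the stopping cube $Q$ to its dyadic children multiplies the relevant fraction by at most $2^d$, and the fraction is measured against a child rather than against $Q$ itself, so one wants a margin of room below $1/4$. Let $\cs$ be the resulting family of stopping cubes (together with $Q^0$, or handling $Q^0$ separately).

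Second, I would verify the two structural claims. \emph{Pairwise near-disjointness with parameter $\nu$:} for each stopping cube $Q$ set $E(Q):=Q\setminus\bigcup\{Q'\in\cs:Q'\subsetneq Q,\ Q'\text{ maximal}\}$; by maximality of the selected children and the definition of the stopping condition, the selected children of $Q$ cover at most a $\nu$-fraction of $Q$, hence $\measure{E(Q)}\ge(1-\nu)\measure{Q}$ — wait, this gives $(1-\nu)$, which is even stronger than $\nu$, so the family is pairwise nearly disjoint with parameter $\nu$ (indeed with $1-\nu$). \emph{Pointwise bound:} for a.e.\ $x\in Q^0$, either $x$ lies in only finitely many stopping cubes or in infinitely many; in the latter case a Lebesgue-differentiation argument forces $f(x)$ to equal the limiting center, so that branch contributes nothing. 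On the part where $x$ lies in a last stopping cube (or in $Q^0$ minus all maximal stopping subcubes), I would telescope: writing $Q^0=R_0\supsetneq R_1\supsetneq\cdots\supsetneq R_m\ni x$ for the chain of stopping cubes containing $x$, the triangle inequality gives
\be
\norm{f(x)-c^0}\le \norm{f(x)-c_{R_m}}+\sum_{i=1}^{m}\norm{c_{R_i}-c_{R_{i-1}}}.
\ee
For $x$ outside the bad set of its stopping parent (which is where a.e.\ such $x$ lands, by the stopping construction applied one level down), each term $\norm{f(x)-c_{R_m}}$ is $\le 2\omega_{(1-\nu)2^{-d-2}}(f;R_m)$ by the definition of the quasi-optimal center, and each difference of consecutive centers $\norm{c_{R_i}-c_{R_{i-1}}}$ is controlled — via the triangle inequality through a point $y\in R_i$ lying outside \emph{both} the bad set of $R_i$ and the bad set of $R_{i-1}$ (such $y$ exists because each bad set has relative measure $\le\nu<1/2<1-\nu$, hmm, one needs the two bad sets together to not fill $R_i$, which holds since each is a small fraction) — by $2\omega_{(1-\nu)2^{-d-2}}(f;R_i)+2\omega_{(1-\nu)2^{-d-2}}(f;R_{i-1})$, using that $R_i\subset R_{i-1}$. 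Summing over the chain and reorganizing, every stopping cube $Q$ appears with a bounded multiplicity and the constant $12$ comes out.

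The main obstacle, and the place I would spend the most care, is the bookkeeping of the constant and of the exceptional sets: one must check that the ``good point'' $y$ needed to compare $c_{R_i}$ with $c_{R_{i-1}}$ genuinely exists (i.e.\ that the union of finitely many bad sets, each of relative density at most $\nu$ in the relevant cube, does not exhaust the cube), that the a.e.-$x$ statement survives the infinite stopping chains (this is the Lebesgue-point argument showing $f(x)=\lim_i c_{R_i}$ on the set of $x$ belonging to infinitely many stopping cubes, which forces that set to contribute $0$), and that the multiplicity with which a fixed $Q\in\cs$ enters the final sum — once as the ``last cube'' term and at most twice as a ``consecutive center difference'' term for itself and its stopping child — is bounded by a number that, together with the factor $2$ from the quasi-optimality, totals at most $12$. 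None of these steps is deep, but the vector-valued setting removes the crutch of ordering, so each comparison of centers must be routed through an explicit common good point rather than through monotonicity of the median; organizing this cleanly is the crux. The argument otherwise runs in parallel with Lerner's, with $c_\lambda(f;Q)$ and $B_E(c,r)$ systematically replacing the median and the interval $(m-r,m+r)$.
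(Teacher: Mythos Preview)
Your broad strategy---recursive stopping, telescoping along chains, and comparing successive centers through a common ``good point''---is exactly the paper's, and the common-good-point device is precisely what the paper packages as Lemma~\ref{3rproperty}. The gap is in how you calibrate the stopping rule. First, near-disjointness: with your criterion the selected children $Q'$ of $\widehat Q$ satisfy $\nu\sum|Q'|<\sum|Q'\cap\{\norm{f-c_{\widehat Q}}>2\omega_\lambda(\widehat Q)\}|\leq\tfrac14|\widehat Q|$ (the last bound from quasi-optimality of $c_{1/4}$ and $\omega_{1/4}\leq\omega_\lambda$), which gives only $\sum|Q'|\leq\tfrac{1}{4\nu}|\widehat Q|$; your assertion that the children cover ``at most a $\nu$-fraction'' is unjustified, and the inequality $\tfrac{1}{4\nu}\leq 1-\nu$ needed for parameter-$\nu$ near-disjointness holds only at $\nu=\tfrac12$. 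Second, and more seriously, the center comparison needs an \emph{upper} bound on the bad-for-$c_{\widehat Q}$ density inside the stopped cube $Q'$, whereas your stopping condition supplies a \emph{lower} bound $>\nu$. Invoking maximality of the dyadic parent yields at best $|Q'\cap\text{bad}|\leq 2^d\nu\,|Q'|$, and when $Q'$ is an immediate child of $\widehat Q$ one gets only $|Q'\cap\text{bad}|\leq\tfrac14|\widehat Q|=2^{d-2}|Q'|$; for $d\geq 2$ neither bound leaves room for a common good point, so the comparison $\norm{c_{R_i}-c_{R_{i-1}}}\leq 2\omega(R_i)+2\omega(R_{i-1})$ is not established.

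The paper repairs both issues by shifting the selection one dyadic level: it takes the maximal cubes $Q$ for which some \emph{child} has bad-density $>\kappa=\tfrac14$ (measured against the radius $\rho_\lambda(f-c_\kappa(\widehat Q);\widehat Q)$ rather than $2\omega_\lambda$). Maximality then forces $Q$ itself to have bad-density $\leq\kappa$, so Lemma~\ref{3rproperty} applies on $Q$ directly and gives $\norm{c_\kappa(\widehat Q)-c_\kappa(Q)}\leq 3\rho_\lambda(\widehat Q)$ with no $2^d$ loss; the pointwise bound on the complement is obtained via Fujii's lemma (Lemma~\ref{fuji}) rather than from the bare definition of $c_{1/4}$; and the measure of the stopped family is controlled through the high-density \emph{child}, yielding $\sum|Q|\leq 2^d\tfrac{\lambda}{\kappa}|\widehat Q|=(1-\nu)|\widehat Q|$ for $\lambda=(1-\nu)2^{-d-2}$, $\kappa=\tfrac14$. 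The constant $12$ then falls out as $3\times 4$, the factor $4$ coming from the separate estimate $\rho_\lambda(f-c_{1/4};Q)\leq 4\,\omega_\lambda(f;Q)$ of Lemma~\ref{keylemma2}.
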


The proof of the Theorem \ref{lernersformula}, together with a discussion of the notion of the optimal oscillatory bound and the notion of a quasi-optimal center of oscillation, constitutes Section \ref{sec:lernersformula}.

\section{Pointwise dyadic domination theorem and $A_p$ theorem for vector-valued Calder\'{o}n-Zygmund operators}\label{section:dyadicdomination}
\begin{notation}Let $Q$ be a cube. We denote by $l_Q$ the side length and by $c_Q$ the center of the cube $Q$. Let $2^kQ$ denote the cube that has the same center as $Q$ but that has the side length $2^k l_Q$. We denote by $Q^{(k)}$ the $k$:th dyadic ancestor of a cube $Q$. We denote by $\cd$ the standard dyadic system,
$$\cd:=\{2^{-j}\left([0,1)^d+m\right) : j\in\mathbb{Z}, m\in\mathbb{Z}^d\}.$$
We use the notations $$\{\norm{f}>r\}:=\{x\in \rn : \norm{f(x)}>r\}$$ and
$$
\fint_Q f:=\frac{1}{\measure{Q}}\int_Q f.
$$
\end{notation}
\begin{thm}[Pointwise dyadic domination theorem for vector-valued Calder\'{o}n-Zygmund operators]\label{dyadicdomination} Suppose that $f:\rn\to E$ is a strongly measurable, bounded, and compactly supported function. Let $Q^0$ be a cube that contains the support of $f$. Suppose that $T$ is a vector-valued Calder\'{o}n-Zygmund operator with the H\"older exponent $\alpha\in(0,1]$. 

Then for each translated dyadic system  $u\in\{0,\frac{1}{3},\frac{2}{3}\}^d$ and for each $k\in\mathbb{N}$ there exists a collection $\cs^u_k$ of dyadic cubes such that the collection $\cs^u_k$ is pairwise nearly disjoint and for almost every $x\in\rn$ we have
$$
1_{Q^0}(x)\norm{(Tf)(x){\color{red}}} \lesssim \sum_{u\in\{0,\frac{1}{3},\frac{2}{3}\}^d}\sum_{k=0}^{\infty} 2^{-\alpha k} (A_{\cs^u_k,k}\norm{f})(x).
$$
The implicit constant in the inequality depends only on the dimension $d$ and on the constants that are implicit in the definition of a vector-valued Calder\'{o}n-Zygmund operator. Each collection $\cs^u_k$ depends on $T$ and $f$.
\end{thm}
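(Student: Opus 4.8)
The plan is to follow, once Theorem~\ref{lernersformula} is granted, the real-valued scheme of Hyt\"onen--Lacey--P\'erez \cite{arXiv:1202.2229} essentially verbatim: the point is that nothing below depends quantitatively on the Banach space $E$. First I would make two reductions. Enlarging $Q^0$ one may assume $Q^0\in\cd$, because the asserted inequality for a larger dyadic cube implies it for $Q^0$ (the factor $1_{Q^0}$ only shrinks), and then the dyadic subcubes of $Q^0$ supplied by Theorem~\ref{lernersformula} belong to $\cd$. Secondly, one needs that a vector-valued Calder\'on--Zygmund operator is of weak type $(1,1)$, $\normweaklonerne{Tg}\lesssim\normlonerne{g}$; this is proved by the classical Calder\'on--Zygmund decomposition, which transfers unchanged because the decomposition is performed on the scalar function $\norm g$, the cancellation of the bad part is the vanishing of a Bochner integral, and the kernel enters only through its $B(E\to E)$-norm.

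The first substantive step is a local oscillation estimate: for every cube $Q$ and every $\lambda\in(0,\tfrac12)$, and after a harmless fixed dilation of $Q$,
\[
\omega_\lambda(Tf;Q)\lesssim_{d,\alpha,\lambda}\sum_{j=0}^{\infty}2^{-\alpha j}\fint_{2^jQ}\norm f .
\]
I would prove this by splitting $f=f1_{CQ}+f1_{(CQ)^c}$ with $C=C_d$ large and using the elementary bound $\omega_\lambda(g+h;Q)\le(\norm g\,1_Q)^*(\tfrac\lambda2\measure Q)+\omega_{\lambda/2}(h;Q)$. The local part is controlled by the weak $(1,1)$ bound, $(\norm{T(f1_{CQ})})^*(\tfrac\lambda2\measure Q)\le\tfrac{2}{\lambda\measure Q}\normweaklonerne{T(f1_{CQ})}\lesssim_{d,\lambda}\fint_{CQ}\norm f$; the nonlocal part is at most the oscillation of $T(f1_{(CQ)^c})$ over $Q$, which by the kernel representation and the H\"older estimate $\normbee{K(x,y)-K(x',y)}\lesssim|x-x'|^\alpha|x-y|^{-d-\alpha}$ (valid since $|x-x'|<\tfrac12|x-y|$ for $x\in Q$, $y\notin CQ$), followed by the usual annular decomposition of $(CQ)^c$, is bounded by $\sum_{j\ge1}2^{-\alpha j}\fint_{2^jCQ}\norm f$. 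Absorbing dimensional constants gives the claim; since $f$ is supported in $Q^0$, the same computation keeping only the local part yields $\omega_{1/4}(Tf;Q^0)\lesssim\fint_{Q^0}\norm f$.

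Next I would apply Theorem~\ref{lernersformula} to $Tf$ on $Q^0$ with, say, $\nu=\tfrac12$, obtaining a collection $\cs\subseteq\cd$ of pairwise nearly disjoint subcubes of $Q^0$ with $1_{Q^0}(x)\norm{Tf(x)-c}\le 12\sum_{Q\in\cs}\omega_{(1-\nu)2^{-d-2}}(Tf;Q)1_Q(x)$ a.e., for any quasi-optimal center $c=c_{1/4}(Tf;Q^0)$. To dispose of $c$: by weak $(1,1)$ the subset of $Q^0$ where $\norm{Tf}>\Lambda\fint_{Q^0}\norm f$ has measure $<\tfrac14\measure{Q^0}$ for a suitable dimensional $\Lambda$, while by the defining property of the quasi-optimal center the set $\{x\in Q^0:Tf(x)\notin B_E(c,r)\}$ has measure $\le\tfrac14\measure{Q^0}$ for some $r\le 2\omega_{1/4}(Tf;Q^0)$; evaluating at a point outside both sets gives $\norm c\lesssim\fint_{Q^0}\norm f$ by the previous step. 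Inserting the local oscillation estimate leaves
\[
1_{Q^0}(x)\norm{Tf(x)}\lesssim 1_{Q^0}(x)\fint_{Q^0}\norm f+\sum_{Q\in\cs}\Big(\sum_{j=0}^{\infty}2^{-\alpha j}\fint_{2^jQ}\norm f\Big)1_Q(x)\quad\text{a.e.}
\]
The final step is purely combinatorial and is lifted from the real-valued case: by the one-third trick each dilate $2^jQ$ with $Q\in\cs\subseteq\cd$ lies inside a cube of one of the translated grids $\cd^u$, $u\in\{0,\tfrac13,\tfrac23\}^d$, of side length $\lesssim_d 2^jl_Q$; sorting the terms $1_Q\fint_{2^jQ}\norm f$ by the grid $u$ and by the ratio of side lengths, which is $2^{j+O_d(1)}$ and hence matches the complexity $k\sim j$ up to a bounded shift, produces families $\cs^u_k$ that are again pairwise nearly disjoint — here one uses that equally sized dyadic cubes are disjoint, together with the near-disjointness of $\cs$ — and for which $\sum_{Q\in\cs}\big(\sum_j 2^{-\alpha j}\fint_{2^jQ}\norm f\big)1_Q\lesssim\sum_u\sum_k 2^{-\alpha k}(A_{\cs^u_k,k}\norm f)$; the stray term $1_{Q^0}\fint_{Q^0}\norm f$ is absorbed into a $k=0$ family in the same way. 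This is the desired domination, and Corollary~\ref{vvapt} then follows by inserting the known weighted bounds for the $A_{\cs,k}$, as explained after the statement of Theorem~\ref{introductiondyadicdomination}.

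The main difficulty is not in this proof but has been quarantined into Theorem~\ref{lernersformula}, whose proof replaces the median by the $E$-valued quasi-optimal center of oscillation. Within the present argument the two points that deserve care rather than being automatic are the vector-valued weak type $(1,1)$ bound and the extraction from it of the estimate $\norm c\lesssim\fint_{Q^0}\norm f$ on the center; the rest is a routine transcription of the scalar proof with $|\cdot|$ replaced by $\norm{\,\cdot\,}$ and averages read as Bochner integrals.
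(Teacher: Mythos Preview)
Your proof is correct and follows essentially the same route as the paper: apply the vector-valued Lerner formula to $Tf$ on $Q^0$, bound $\omega_\lambda(Tf;Q)$ by the geometric series $\sum_k 2^{-\alpha k}\fint_{2^kQ}\norm f$ via the kernel H\"older estimate and the weak-type $(1,1)$ bound (which the paper quotes from Benedek--Calder\'on--Panzone rather than reproving), control $\norm{c_{1/4}(Tf;Q^0)}$ by $\fint_{Q^0}\norm f$ using the same weak-type bound, and then invoke the one-third trick exactly as in \cite{arXiv:1202.2229} to pass to the translated grids. The only cosmetic differences are that the paper subtracts the specific constant $T(1_{(2Q)^c}f)(c_Q)$ rather than using your subadditivity bound $\omega_\lambda(g+h;Q)\le(\norm g\,1_Q)^*(\tfrac\lambda2\measure Q)+\omega_{\lambda/2}(h;Q)$, and it does not bother with the preliminary reduction $Q^0\in\cd$ since the one-third trick applies to arbitrary cubes.
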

\begin{proof} 
The proof of the dyadic domination theorem in the vector-valued case proceeds parallel to the proof in the real-valued case \cite{arXiv:1202.2229}. By using the vector-valued generalization of Lerner's local oscillation decomposition formula, Theorem \ref{lernersformula}, we dominate $T$ by a series of optimal oscillatory bounds $\omega_\lambda(Tf;Q)1_Q$ summed over all the cubes of a pairwise nearly disjoint (with the parameter $\frac{1}{2}$) collection $\cs'$ of dyadic cubes,
$$
1_{Q^0}(x)\norm{Tf(x)-c_{1/4}(Tf;Q^0)}\leq 12\sum_{Q\in\cs'}\omega_{2^{-d-3}}(Tf;Q)1_{Q}(x).
$$

Let $\lambda\in(0,\frac{1}{2})$. We claim that for each cube $Q$ the optimal oscillatory bound $\omega_\lambda(Tf;Q)$ is bounded by a series of the non-dyadic integral averages $\fint_{2^k Q} \norm{f}$, 
$$
\omega_\lambda(Tf;Q)\lesssim_{T,d,\lambda} \sum_{k=0}^\infty 2^{-\alpha k} \fint_{2^kQ}\norm{f}.
$$
Moreover, we claim that for each cube $Q^0$ that contains the support of $f$ we have 
$$
\norm{c_\lambda(Tf;Q^0)}\lesssim_{T,\lambda} \fint_{Q^0}\norm{f}.
$$
Assuming for the moment these claims, which are stated and proven as Lemma \ref{oscillationcalderon}, we complete the proof.  

Recall that for each translation parameter $u\in\{0,\frac{1}{3},\frac{2}{3}\}^d$ there is the translated dyadic system 
$$\cd^u:=\{2^{-j}([0,1)^d+m+(-1)^ju) : j\in\mathbb{Z}, m\in\mathbb{Z}^d\},$$ and that for each cube $Q$ and each $k\in\mathbb{N}$ we can find  a cube $R(Q,k)$ that is dyadic in the translated dyadic systems $\cd^{u(Q,k)}$ for some $u(Q,k)\in\{1,\frac{1}{3},\frac{2}{3}\}^d$ and that satiesfies $Q\subset R(Q,k),\; 2^kQ\subset R(Q,k)^{(k)}$, and $3\cdot l_Q < l_{R(Q,k)} \leq 6\cdot l_Q$ \cite[Proposition 2.5.]{arXiv:1202.2229}. Hence we can dominate each non-dyadic integral average $1_Q\fint_{2^k Q}\norm{f}$ by the dyadic integral average $1_{R(Q,k)}\fint_{R(Q,k)^{(k)}} \norm{f}$,
$$
1_Q\fint_{2^k Q}\norm{f}\lesssim_d 1_{R(Q,k)}\fint_{R(Q,k)^{(k)}} \norm{f}.
$$

Let $k\in\mathbb{N}$. By the definition of a pairwise nearly disjoint collection with the parameter $\frac{1}{2}$, for each $Q\in\cs'$ there exists a measurable subset $E(Q)\subset Q$ such that $\measure{E(Q)}\geq\frac{1}{2}\measure{Q}$ and for each $Q\in\cs'$ and $Q'\in \cs'$ such that $Q\neq Q'$ we have $E(Q)\cap E(Q')=\emptyset$.  Since $Q$ and $R(Q,k)$ are of comparable size, we have $$\measure{E(Q)}\geq\frac{1}{2}\measure{Q}=\frac{1}{2} (l_Q)^d\geq \frac{1}{2}\left( \frac{l_{R(Q,k)}}{6}\right)^d= \frac{1}{2\cdot 6^d} \measure{R(Q,k)}.$$ 
Hence the collection $\cs_k:=\{R(Q,k)\}_{Q \in \cs'}$ is pairwise nearly disjoint with the parameter $\frac{1}{2\cdot 6^d}$. Moreover, observe that since the side-lengths $l_Q$ and $l_{R(Q,k)}$ are both powers of $2$ and since $3\cdot l_Q < l_{R(Q,k)} \leq 6\cdot l_Q$, we have in fact that $l_{R(Q,k)}=4l_Q$. Since a cube $R(Q,k)$ with side-length $4l_Q$ can contain at most $4^d$ cubes of side lenth $l_Q$ and since $\cs_k:=\{R(Q,k)\}_{Q\in\cs'}$, we have
$$
\sum_{Q\in\cs'} 1_{R(Q,k)} \leq 4^d \sum_{R\in\cs_k} 1_{R}.
$$
Observe that we can decompose the pairwise nearly disjoint collection $\cs_k$, which contains dyadic cubes of various translated dyadic systems, into $3^d$ pairwise nearly disjoint collections $\cs^u_k$, each of which contains  dyadic cubes of at most one translated dyadic system, 
$$
\cs_k=\{R(Q,k)\}_{Q \in \cs'} =\bigcup_{u\in\{0,\frac{1}{3},\frac{2}{3}\}^d} \{R(Q,k)\}_{Q \in \cs' \text{and} R(Q,k)\in\cd^u}=:\bigcup_{u\in\{0,\frac{1}{3},\frac{2}{3}\}^d} \cs^u_k.
$$ 

Altogether we have obtained that
\ba
1_{Q^0}(x)\norm{Tf(x)}&\leq& 1_{Q^0}(x) \norm{Tf(x)-c_{1/4}(Tf;Q^0)}+1_{Q^0}(x)\norm{c_{1/4}(Tf;Q^0)}\\
&\lesssim_{T} & \sum_{Q\in\cs'}\omega_{2^{-d-3}}(f;Q)1_{Q}(x)+1_{Q^0}(x)\fint_{Q^0}\norm{f}\\
&\lesssim_{T,d}& \sum_{k=0}^\infty 2^{-\alpha k} \sum_{Q\in\cs'}1_Q(x)\fint_{2^kQ}\norm{f}\\
&\lesssim_{d}&\sum_{k=0}^\infty 2^{-\alpha k} \sum_{u\in\{0,\frac{1}{3},\frac{2}{3}\}^d}\underbrace{\sum_{R\in\cs^u_k} 1_{R}(x)\fint_{R^{(k)}} \norm{f}}_{=:(A_{\cs^u_k,k}\norm{f})(x)}.
\ea
This concludes the proof, modulo Lemma \ref{oscillationcalderon}.
\end{proof}
\begin{lemma}\label{oscillationcalderon}

Suppose that $\lambda\in(0,\frac{1}{2}).$ Let $L^\infty_c(\rn\to E)$ denote the set of all strongly measurable, bounded, and compactly supported functions from $\rn$ to $E$. Suppose that $T$ is a vector-valued Calder\'{o}n-Zygmund operator with the H\"older exponent $\alpha\in(0,1]$.

Then we have that

$$\omega_\lambda(Tf;Q)\lesssim\; \sum_{k=0}^\infty 2^{-\alpha k} \fint_{2^kQ}\norm{f}$$
for all $f\in L^\infty_c(\rn\to E)$ and all cubes $Q$. Furthermore, we have that

$$\norm{c_\lambda(Tf;Q^0)}\lesssim\; \fint_{Q^0} \norm{f}
$$
for all $f\in L^\infty_c(\rn\to E)$ and all cubes $Q^0$ that contain the support of $f$.

\end{lemma}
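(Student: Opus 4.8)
The plan is to adapt the real-variable argument behind the local oscillation estimates for Calder\'on--Zygmund operators to the Bochner-space setting, replacing the two uses of the order structure of $\rone$ by (i) the vector-valued weak type $(1,1)$ bound $\normweaklonerne{Tg}\lesssim\normlonerne{g}$ for $g\in L^\infty_c(\rn\to E)$, and (ii) the elementary remark that, if a set $A$ satisfies $\measure{A\cap Q}\leq\lambda\measure Q$, then controlling $\norm{Tf}$ on $Q\setminus A$ controls $\omega_\lambda(Tf;Q)$ and, similarly, forces $\norm{c_\lambda(Tf;Q^0)}$ to be small. For ingredient (i) I would invoke the classical Calder\'on--Zygmund decomposition argument, which goes through verbatim for Bochner-measurable functions: one decomposes with respect to $\norm{g}$, the ``good'' part is bounded in $E$-norm, and the only cancellation used, $\int_Q\bigl(g-\fint_Q g\bigr)=0$, is a Bochner-integral identity; the H\"older estimate on $K$ supplies the Hörmander condition. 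Everything else is bookkeeping.

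\emph{The first estimate.} Fix a cube $Q$ and a dimensional constant $\kappa\geq 4\sqrt d$, and split $f=f\,1_{\kappa Q}+f\,1_{(\kappa Q)^c}=:g_0+g_\infty$. As center I would take $c:=Tg_\infty(c_Q)$, which is well defined because $c_Q$ lies well inside $\kappa Q$, hence outside $\supp g_\infty$; by the kernel representation, for a.e.\ $x\in Q$,
\[
\norm{Tg_\infty(x)-Tg_\infty(c_Q)}\leq\int_{(\kappa Q)^c}\normbee{K(x,y)-K(c_Q,y)}\,\norm{f(y)}\,\mathrm{d}y ,
\]
and since $\abs{x-c_Q}\leq\tfrac{\sqrt d}{2}\,l_Q<\tfrac12\abs{x-y}$ for $x\in Q$ and $y\in(\kappa Q)^c$ (this is where $\kappa\geq 4\sqrt d$ enters), the H\"older estimate on $K$ applies. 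Splitting $(\kappa Q)^c$ into the dyadic annuli $2^{k+1}Q\setminus 2^kQ$, on which $\abs{c_Q-y}\sim 2^k l_Q$, each annulus contributes $\lesssim_d 2^{-\alpha k}\fint_{2^{k+1}Q}\norm{f}$, so the left-hand side is $\leq A:=C\sum_{k\geq 0}2^{-\alpha k}\fint_{2^kQ}\norm{f}$ uniformly in $x\in Q$. For $g_0$, the weak type $(1,1)$ bound gives $\measure{\{x\in Q:\norm{Tg_0(x)}>r\}}\leq\tfrac{C_T}{r}\int_{\kappa Q}\norm{f}$, so with $r_0:=\tfrac{C_T\kappa^d}{\lambda}\fint_{\kappa Q}\norm{f}$ this measure is $\leq\lambda\measure Q$, and $r_0$ is itself dominated, up to a constant depending on $d$ and $\lambda$, by the term of the series $A$ of index $k_0$ with $2^{k_0}$ comparable to $\kappa$. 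Because $\norm{Tf(x)-c}\leq\norm{Tg_0(x)}+A$ for a.e.\ $x\in Q$, the set $\{x\in Q:Tf(x)\notin B_E(c,\,A+r_0)\}$ sits, up to a null set, inside $\{x\in Q:\norm{Tg_0(x)}>r_0\}$, whence $\omega_\lambda(Tf;Q)\leq A+r_0\lesssim_{T,d,\lambda}\sum_{k\geq 0}2^{-\alpha k}\fint_{2^kQ}\norm{f}$.

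\emph{The second estimate.} Let $Q^0\supset\supp f$ and $c:=c_\lambda(Tf;Q^0)$. By the definition of a quasi-optimal center there is $r\leq 2\omega_\lambda(Tf;Q^0)$ with $\measure{\{x\in Q^0:\norm{Tf(x)-c}>r\}}\leq\lambda\measure{Q^0}$, so $G:=\{x\in Q^0:\norm{Tf(x)-c}\leq r\}$ has $\measure G\geq(1-\lambda)\measure{Q^0}>0$. The weak type $(1,1)$ bound applied to $f$ (supported in $Q^0$) gives $\measure{\{x\in Q^0:\norm{Tf(x)}>s\}}\leq\tfrac{C_T}{s}\int_{Q^0}\norm{f}$, which for $s:=\tfrac{2C_T}{1-\lambda}\fint_{Q^0}\norm{f}$ is strictly less than $\measure G$; hence some $x_0\in G$ has $\norm{Tf(x_0)}\leq s$, and so $\norm{c}\leq\norm{Tf(x_0)}+\norm{Tf(x_0)-c}\leq s+r\leq\tfrac{2C_T}{1-\lambda}\fint_{Q^0}\norm{f}+2\omega_\lambda(Tf;Q^0)$. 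Finally, since $\supp f\subset Q^0$ one has $\fint_{2^kQ^0}\norm{f}=2^{-kd}\fint_{Q^0}\norm{f}$, so the first estimate yields $\omega_\lambda(Tf;Q^0)\lesssim\sum_{k\geq 0}2^{-(\alpha+d)k}\fint_{Q^0}\norm{f}\lesssim_d\fint_{Q^0}\norm{f}$, and combining the last two bounds gives $\norm{c_\lambda(Tf;Q^0)}\lesssim_{T,d,\lambda}\fint_{Q^0}\norm{f}$.

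The routine parts are the annular summation and the tracking of dimensional constants. I expect the only point needing genuine care to be ingredient (i): checking (or citing) that the classical weak type $(1,1)$ theory for Calder\'on--Zygmund operators carries over to $E$-valued functions --- which it does, since that proof never uses the order of $\rone$, only the norm $\norm{\,\cdot\,}$ and the Bochner-integral cancellation of the bad part.
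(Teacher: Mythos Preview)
Your argument is correct and follows essentially the same route as the paper: split $f$ into a near part on a fixed dilate of $Q$ and a far part, take the center $c=T(\text{far part})(c_Q)$, control the far contribution pointwise via the H\"older estimate summed over dyadic annuli, and control the near contribution in measure via the vector-valued weak type $(1,1)$ inequality (which the paper cites from Benedek--Calder\'on--Panzone rather than reproving). The only cosmetic difference is in the second estimate: the paper invokes its ``$3r$ lemma'' for quasi-optimal centers to get $\norm{c_\lambda(Tf;Q^0)}\leq 3(\norm{Tf}1_{Q^0})^*(\lambda\measure{Q^0})$ in one stroke and then applies weak $(1,1)$, whereas you reprove that lemma inline (the ``pick a common good point $x_0$'' step) and additionally feed back the first estimate to bound $r\leq 2\omega_\lambda(Tf;Q^0)$---your detour is sound but slightly longer.
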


\begin{proof} We shall first proof the estimate for $\omega_\lambda(Tf;Q)$. Recall that the Euclidean distance $\abs{x}:=\left(\sum_{i=1}^dx_i^2\right)^{1/2}$ and the distance $\abs{x}_\infty:=\max_{i=1,\ldots,d} \abs{x_i}$ are equivalent in the sense that $\abs{x}_\infty\leq\abs{x}\leq d^{1/2}\,\abs{x}_\infty$. From the equivalence of the distances it follows that if a singular kernel $K$ satisfies the decay and the H\"older estimates of its definition for the Euclidean distance $\abs{x}$, then $K$ satisfies the same estimates (with the implicit constants depending on $d$) for the distance $\abs{x}_\infty$. For this proof we shall work with the distance  $\abs{x}_\infty$ in order to slightly simplify the use of the kernel estimates. Let $f:\rn\to E$ be a strongly measurable, bounded, and compactly supported function. Suppose that $Q$ is a cube.  Let $x\in Q$. Since
\begin{equation*}
\te f(x)-\te (1_{(2Q)^c}f)(c_Q)=\te (1_{2Q}f)(x) + \left(\te (1_{(2Q)^c}f)(x) - \te (1_{(2Q)^c}f)(c_Q)  \right),
\end{equation*}
we have
\begin{equation}
\label{oscillationlemma1}
\norm{Tf(x)-T(1_{(2Q)^c}f)(c_Q)}\leq \norm{T (1_{2Q}f)(x)} + \norm{T(1_{(2Q)^c}f)(x) - T(1_{(2Q)^c}f)(c_Q) }.
\end{equation}
Observe that for the last term on the right-hand side we can use the integral presentation with a singular kernel, because $x\in Q$ lies outside the support of $1_{(2Q)^c}f$, and we can use the H\"older estimate of a singular kernel, because for $y\in (2Q)^c$ we have $\abs{c_Q-x}_\infty\leq \frac{1}{2}\abs{c_Q-y}_\infty$. Therefore
\begin{eqnarray}
\label{oscillationlemma2}
\norm{\te (1_{(2Q)^c}f)(x) - \te (1_{(2Q)^c}f)(c_Q)  }&\leq& \sum_{k=1}^\infty \int_{2^{k+1}Q\setminus 2^kQ} \norm{(K(x,y)-K(c_Q,y)f(y)}\,\mathrm{d}y\nonumber\\
&\lesssim_{T,d}& \sum_{k=1}^\infty \int_{2^{k+1}Q\setminus 2^kQ} \underbrace{\left(\frac{\abs{x-c_Q}_\infty}{\abs{y-c_Q}_\infty}\right)^\alpha}_{\lesssim 2^{-\alpha k}} \underbrace{\left(\frac{1}{\abs{y-c_Q}_\infty^d}\right)}_{\lesssim {\left(2^kl(Q)\right)}^{-d}\eqsim_d\measure{2^{k+1}Q}^{-1} } \norm{f(y)}\,\mathrm{d}y\nonumber\\
&\lesssim_{d} & \sum_{k=1}^\infty 2^{-\alpha k}\fint_{2^{k+1}Q} \norm{f(y)}\,\mathrm{d}y.
\end{eqnarray}
Let $g:\rn\to E$ be a Lebesgue measurable function. Recall that {\it the decreasing rearrangement of g}, denoted by $\norm{g}^*$, is defined as $$\norm{g}^*(t):=\min \{r\geq 0: \measure{\{\norm{g}>r\}}\leq t\} \quad \text{for every } t\in[0,\infty).$$ In order to estimate the optimal oscillatory bound $\omega_\lambda(f;Q)$ by exploiting the properties of a decreasing rearrangement, we write the optimal oscillatory bound in terms of the decreasing rearrangement $$\omega_\lambda(f;Q)=\inf_{c\in E}\left(\norm{f-c}1_Q\right)^*(\lambda\measure{Q}).$$ 
We shall use the following properties, which are all well-known, of a decreasing rearrangement.
\begin{itemize}
\item[(i)]$\norm{g}^*(t)\leq t^{-1/p}\normweaklprne{g}\quad\text{for each } p\in(0,\infty).$
\item[(ii)]$ \norm{g+v}^*(t) \leq \norm{g}^*(t)+\norm{v}$ for every constant vector  $v\in E$.
\item[(iii)]If for some real constant $C\in[0,\infty)$ we have that $\norm{g(x)}\leq C\norm{f(x)}$ for almost every point $x\in\rn$, then $\norm{g}^*(t)\leq C \norm{f}^*(t)$ for every $t\in[0,\infty)$.
\end{itemize}
By the properties (ii) and (iii) of a decreasing rearrangement and the inequalities $\eqref{oscillationlemma1}$ and $  \eqref{oscillationlemma2}$ we can estimate the optimal oscillatory bound $\omega_\lambda(Tf;Q)$ as follows.
\ba
\omega_\lambda(Tf;Q)&=&\inf_{c\in E}\left(\norm{Tf-c}1_Q\right)^*(\lambda\measure{Q})\\
&\leq& \left(\norm{Tf(x)-T(1_{(2Q)^c}f)(c_Q)}1_Q\right)^*(\lambda\measure{Q})\\
&\lesssim_{T,d}&\left(\left(\norm{T(1_{2Q}f)} +\sum_{k=1}^\infty 2^{-\alpha k}\fint_{2^{k+1}Q} \norm{f}  \right)1_Q\right)^*(\lambda\measure{Q})\\
&\leq& \left(\norm{T(1_{2Q}f)}1_Q\right)^*(\lambda\measure{Q})+ \sum_{k=1}^\infty 2^{-\alpha k}\fint_{2^{k+1}Q} \norm{f}.
\ea
Recall the weak-type (1,1) inequality for $T$, $$\normweaklonerne{Tg}\lesssim \normlonerne{g}\quad\text{for all $g\in\lonernr$},$$
which is well-known and proven in \cite[The proof of Theorem 1]{benedek1962}.
By the property (i) of a decreasing rearrangement together with the weak-type (1,1) inequality for $T$ we have 
$$
\left(\norm{T(1_{2Q}f)}1_Q\right)^*(\lambda \measure{Q})\leq \frac{1}{\lambda\measure{Q}}\normweaklonerne{T(1_{2Q}f)}\lesssim_{T,\lambda} \frac{1}{\measure{2Q}}\normlonerne{1_{2Q}f}=\fint_{2Q}\norm{f}.
$$
Hence altogether we have
$$
\omega_\lambda(Tf;Q)\lesssim_{T,d,\lambda} \sum_{k=0}^\infty 2^{-\alpha k}\fint_{2^{k}Q} \norm{f}.
$$

Next we prove the estimate for $\norm{c_\lambda(Tf;Q^0)}$. By the inequality \eqref{3rpropertyb} of Lemma \ref{3rproperty}, the property (i) of a decreasing rearrangement, and the weak-type (1,1) inequality for $T$ we have that
\ba
\norm{c_\lambda(Tf;Q)}&\leq& 3\left(\norm{Tf}1_{Q}\right)^*(\lambda \measure{Q})\\
&\leq&3\frac{1}{\lambda\measure{Q}}\normweaklonerne{Tf}\\
&\lesssim_{T,\lambda}& \frac{1}{\measure{Q}}\normlonerne{f}.
\ea
If $Q^0$ is a cube that contains the support of $f$, then
$$
\frac{1}{\measure{Q^0}}\normlonerne{f}=\frac{1}{\measure{Q^0}}\normlonerne{1_{Q^0}f}=\fint_{Q^0} \norm{f}.
$$

\end{proof}

Next we state the vector-valued $A_p$ theorem as a corollary of the vector-valued pointwise dyadic domination theorem.
\begin{corollary}[$A_p$ theorem for vector-valued Calder\'{o}n-Zygmund operators] Let $E$ be a Banach space.  Suppose that $T$ is a vector-valued Calder\'{o}n-Zygmund operator on the Lebesgue-Bochner space $\lprne$. Define a two-weight generalization $[w,\sigma]_{A_p}$ of the Muckenhoupt $A_p$ characteristic by
$$
[w,\sigma]_{A_p}:=\sup_{\text{all cubes } Q} \frac{w(Q)}{\measure{Q}}\left(\frac{\sigma(Q)}{\measure{Q}}\right)^{p-1}.
$$

Then we have that 
\begin{equation}\label{a2first}
\normlpwrne{T(f\sigma)}\lesssim [w,\sigma]_{A_p}^{1/p}\left([w]_{A_\infty}^{1-1/p}+[\sigma]_{A_\infty}^{1/p}\right)\normlpsrne{f}
\end{equation}
for all $f\in\lpsrne$, for all $w\in A_\infty$, and for all $\sigma\in A_\infty$. In particular we have that
\begin{equation}\label{a2second}
\normlpwrne{Tf}\lesssim [w]_{A_p}^{\max \{1,1/(p-1)\} } \normlpwrne{f}
\end{equation}
for all $f\in\lpwrne$ and for all $w\in A_p$. The implicit constants in the inequalities \eqref{a2first} and \eqref{a2second} depend only on the dimension $d$ and on the constants that are implicit in the definition of a vector-valued Calder\'{o}n-Zygmund operator.  
\end{corollary}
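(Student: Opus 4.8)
The plan is to read off the weighted estimates directly from the pointwise dyadic domination theorem, Theorem~\ref{dyadicdomination}, together with the already known sharp weighted bounds for the \emph{scalar} positive dyadic operators $A_{\cs,k}$; the point is that for a Bochner measurable $f$ the function $\norm{f}$ is a nonnegative scalar function with $\normlpwrnr{\norm{f}}=\normlpwrne{f}$, so Theorem~\ref{dyadicdomination} transfers the $E$-valued question into a purely scalar one. First I would reduce to $g:=f\sigma\in L^\infty_c(\rn\to E)$: the class $\{f:f\sigma\in L^\infty_c(\rn\to E)\}$ is dense in $\lpsrne$ (truncate $f$ both in height, measured against $\sigma^{-1}$, and in support), on it $f\sigma\in L^\infty_c(\rn\to E)\subset\lprne$ so $T(f\sigma)$ is defined by the a priori $\lprne$-boundedness of $T$, and the general case follows by the standard extension-by-density argument. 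So fix $g=f\sigma\in L^\infty_c(\rn\to E)$ and a cube $Q^0\supseteq\supp g$; Theorem~\ref{dyadicdomination} produces collections $\cs^u_k$ of dyadic cubes, each pairwise nearly disjoint with a fixed dimensional parameter, so that
\[
1_{Q^0}(x)\norm{(Tg)(x)}\lesssim\sum_{u\in\{0,\frac{1}{3},\frac{2}{3}\}^d}\sum_{k=0}^{\infty}2^{-\alpha k}(A_{\cs^u_k,k}\norm{g})(x)\qquad\text{for a.e. }x\in\rn,
\]
and, taking $\normlpwrnr{\,\cdot\,}$ of both sides and using the triangle inequality in $\lpwrnr$ together with $\norm{g}=\norm{f\sigma}=\norm{f}\,\sigma$,
\[
\normlpwrnr{1_{Q^0}\norm{Tg}}\lesssim\sum_{u\in\{0,\frac{1}{3},\frac{2}{3}\}^d}\sum_{k=0}^{\infty}2^{-\alpha k}\,\normlpwrnr{A_{\cs^u_k,k}(\norm{f}\,\sigma)}.
\]

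Next I would invoke the known scalar two-weight estimate for the positive dyadic operators: for every collection $\cs$ of dyadic cubes pairwise nearly disjoint with a fixed parameter, every $k\in\mathbb{N}$, and all weights $w,\sigma$,
\[
\normlpwrnr{A_{\cs,k}(h\sigma)}\lesssim(1+k)\,[w,\sigma]_{A_p}^{1/p}\bigl([w]_{A_\infty}^{1-1/p}+[\sigma]_{A_\infty}^{1/p}\bigr)\normlpsrnr{h},
\]
with implicit constant depending only on $d$, $p$ and the near-disjointness parameter, the complexity factor $(1+k)$ being harmless since $\sum_{k\ge 0}2^{-\alpha k}(1+k)<\infty$. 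Applying this with $h=\norm{f}$, using $\normlpsrnr{\norm{f}}=\normlpsrne{f}$, and summing over $k$ and the finitely many $u$ gives $\normlpwrnr{1_{Q^0}\norm{Tg}}\lesssim[w,\sigma]_{A_p}^{1/p}([w]_{A_\infty}^{1-1/p}+[\sigma]_{A_\infty}^{1/p})\normlpsrne{f}$ with a bound independent of $Q^0$; exhausting $\rn$ by an increasing sequence of such cubes and letting $1_{Q^0}\uparrow 1$, the monotone convergence theorem yields \eqref{a2first} for $g=f\sigma\in L^\infty_c(\rn\to E)$, hence, after the density reduction, for all $f\in\lpsrne$.

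To pass from \eqref{a2first} to \eqref{a2second}, I would specialize $\sigma:=\sigma_{w,p}=w^{-1/(p-1)}$ and write $h\in\lpwrne$ as $h=f\sigma$ with $f:=h\,w^{1/(p-1)}$, so that $\normlpsrne{f}^p=\int\norm{h}^p\sigma^{1-p}=\int\norm{h}^p w=\normlpwrne{h}^p$. By definition $[w,\sigma]_{A_p}=\charap$, while $[w]_{A_\infty}\le\charap$ and, by duality of Muckenhoupt classes, $[\sigma]_{A_\infty}\le[\sigma]_{A_{p'}}=\charap^{1/(p-1)}$; hence the right-hand side of \eqref{a2first} is at most
\[
\charap^{1/p}\Bigl(\charap^{1-1/p}+\charap^{1/(p(p-1))}\Bigr)\normlpwrne{h}\lesssim\charap^{\max\{1,1/(p-1)\}}\normlpwrne{h},
\]
since $\tfrac1p+(1-\tfrac1p)=1$ and $\tfrac1p+\tfrac1{p(p-1)}=\tfrac1{p-1}$. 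This is \eqref{a2second}.

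The argument contains no essential obstacle: the heavy lifting is already in Theorem~\ref{dyadicdomination}, and what remains is the reduction to the scalar function $\norm{f}$ plus a citation of the sharp scalar weighted bounds for $A_{\cs,k}$. The only points needing a little care are (i) that the scalar estimate for $A_{\cs,k}$ must grow at most polynomially in the complexity $k$, so that the series over $k$ converges against $2^{-\alpha k}$; (ii) the local-to-global passage, handled by exhausting $\rn$ with cubes and monotone convergence; and (iii) the routine density reduction to bounded, compactly supported functions, together with the accompanying extension of $T$ from this dense class.
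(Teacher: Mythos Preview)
Your proposal is correct and follows essentially the same approach as the paper: reduce by density to bounded compactly supported inputs, apply the pointwise dyadic domination theorem to pass to the scalar operators $A_{\cs,k}$ acting on $\norm{f}$, invoke the known sharp two-weight bound for $A_{\cs,k}$ with polynomial growth in $k$, sum against $2^{-\alpha k}$, and exhaust by cubes via monotone convergence; the deduction of \eqref{a2second} from \eqref{a2first} via $\sigma=w^{-1/(p-1)}$ is also the same. If anything, your density reduction is formulated slightly more carefully than the paper's, since you make explicit that it is $g=f\sigma$, the actual input to $T$, that must lie in $L^\infty_c(\rn\to E)$ for Theorem~\ref{dyadicdomination} to apply.
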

\begin{proof}

 Recall that an operator that is bounded on $\lprnr$ and that has the form
\begin{equation}\label{pdsform}
g\rightmapsto \sum_{Q\in\cd }  \sum_{\substack{R\in\cd, S\in\cd\\R\subset Q, S\subset Q\\l(R)=2^{-m} l(Q), l(S)=2^{-n}l(Q)}} a_{QRS}\, 1_R \fint_{S} g ,
\end{equation}
for some non-negative coefficients $0\leq a_{QRS}\leq 1$, and non-negative integers $m\in\mathbb{N}$ and $n\in\mathbb{N}$,  is called {\it  a positive dyadic shift operator of complexity $\max\{1,m,n\}$}. In particular the operator $A_{\cs,k}$ is a positive dyadic shift operator of complexity $k$, because by \cite[Proposition 2.6]{arXiv:1202.2229} the operator $A_{\cs,k}$ is bounded on $\lprnr$ and because by definition the operator $A_{\cs,k}$ has the form \eqref{pdsform} for $m=k, n=0$, and $$
a_{QRS}=\begin{cases}1 &\text{if } (R,S) \in \{(R,R^{(k)})\}_{R\in\cs},\\
0 & \text{otherwise.} 
\end{cases}.$$
We use the shorthand $N_p(w,\sigma):= [w,\sigma]_{A_p}^{1/p}\left([w]_{A_\infty}^{1-1/p}+[\sigma]_{A_\infty}^{1/p}\right)$. By \cite[Proposition 3.2 and Proposition 5.3.]{arXiv:1106.4797}, if $S_k$ is a positive dyadic shift operator of complexity $k$, then 
\begin{equation*}\label{positivedyadicshiftestimate}
\normlpwrnr{S_k(g\sigma)}\lesssim (1+k)N_p(w,\sigma)\normlpsrnr{g}.
\end{equation*}
Hence in particular
\begin{equation}
\label{weightedpositive}
\normlpwrnr{A_{\cs,k}(g\sigma)}\lesssim (1+k) N_p(w,\sigma)\normlpsrnr{g}.
\end{equation}

Since the subspace of strongly measurable, bounded, and compactly supported functions is dense in $\lpsrne$, since $T$ is linear and since $\lpwrne$ is complete, it suffices to prove the inequality \eqref{a2first} for every such function. Suppose that $f:\rn\to E$ is a strongly measurable, bounded, and compactly supported function. Let $Q^0$ be a cube that contains the support of $f$. By the pointwise dyadic domination theorem, Theorem \ref{dyadicdomination}, and the inequality \eqref{weightedpositive} for $g=\norm{f}$ we obtain
$$
\normlpwrne{1_{Q^0}T(f\sigma)} \lesssim_{T,d}  N_p(w,\sigma)\normlpsrne{f}.
$$
Choosing a sequence of cubes $Q^0_N$ such that each $Q^0_N$ contains the support of $f$, $ Q^0_N\subset Q^0_{N+1}$, and $\bigcup_N Q^0_N =\rn$, and applying the monotone convergence theorem we obtain the weighted inequality \eqref{a2first}.

It is well-known that the inequality \eqref{a2first} implies the inequality \eqref{a2second} as follows. Choosing $\sigma=w^{1/(1-p)}=:\sigma_{w,p}$ and $f\sigma_{w,p}=h$ in the inequality \eqref{a2first} we obtain
\begin{equation}
\label{extens}
\normlpwrne{Th} \lesssim_{T,d} N_p(w,\sigma_{w,p}) \normlpwrne{h}.
\end{equation}
From the well-known facts $[w,\sigma_{w,p}]_{A_p}=[w]_{A_p}$, $[w]_{A_\infty}\leq [w]_{A_{p}}$,       $[\sigma_{w,p}]_{A_{p/(p-1)}}=[w]_{A_p}^{1/(p-1)}$,  and $1\leq[w]_{A_p}$ for every $p\in(1,\infty)$ and for every $w\in A_p$, it follows that 
$$
N_p(w,\sigma_{w,p})\leq [w]_{A_p}^{1/p}\left([w]_{A_p}^{1-1/p}+[\sigma_{w,p}]_{A_{p/(p-1)}}^{1/p}\right)\leq 2 [w]_{A_p}^{\max \{1,1/(p-1)\} }.
$$

\end{proof}

\section{Local oscillation decomposition for vector-valued functions}\label{sec:lernersformula}

\begin{notation} Denote $\mathbb{N}:=\{0,1,2,3,\ldots\}$ the set of all natural numbers. Let $f:\rn\to E$ be a function. We use the notation 
$$
\{f\notin B_E(c,r)\}:=\{x\in \rn : f(x)\notin B_E(c,r)\}=\{x\in \rn : \norm{f(x)-c}>r\}=:\{\norm{f-c}>r\}.
$$
\end{notation}
Let us first recall the key notions of Lerner's local oscillation decomposition formula \cite{lerner2010,arXiv:1202.1860}.

\begin{defn} Let $g:\rn\to \mathbb{R}$ be a Lebesgue measurable function. 
{\it The local oscillation of $g$ on $Q$}, denoted by $\omega_\lambda(g;Q)$, is defined as
\be
\omega_\lambda(g;Q):=\inf_{c\in\mathbb{R}} \min \{r\geq 0: \frac{\measure{\{x\in Q : \abs{g(x)-c}>r\}}}{\measure{Q}}\leq \lambda\}.
\ee
\end{defn}
\begin{defn} Let $g:\rn\to \mathbb{R}$ be a Lebesgue measurable function. 
{\it A median of $g$ on $Q$}, denoted by $m(g;Q)$, is defined as any real number $m(g;Q)$ such that both
\be
\frac{\measure{\{x\in Q : g(x)>m(f;Q)\}}}{\measure{Q}}\leq \frac{1}{2}
\ee
and
\be 
\frac{\measure{\{x\in Q : g(x)<m(f;Q)\}}}{\measure{Q}}\leq \frac{1}{2}.
\ee
\end{defn}
Recall Lerner's local oscillation decomposition formula.
\begin{thm}[Lerner's local oscillation decomposition formula]\label{lernersoriginal}  
Suppose that $g:\rn\to \mathbb{R}$ is a Lebesgue measurable function. Let $Q^0$ be a cube. 

Then there exists a (possibly empty) collection $\cs $ of dyadic subcubes of $Q^0$ such that the collection $\cs$ is pairwise nearly disjoint with the parameter $\frac{1}{2}$ and for every median $m(g;Q^0)$ and for almost every $x\in Q^0$ we have
\be
\abs{g(x)-m(g;Q^0)}\leq 4 \sup_{ \substack{Q\subset Q^0, \\Q\text{ dyadic cube}} } \omega_{2^{-d-2}}(g;Q)1_Q(x) +2\sum_{Q\in \cs}\omega_{2^{-d-2}}(g;Q)1_{Q}(x).
\ee
\end{thm}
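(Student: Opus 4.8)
The plan is to prove the formula by a Calder\'on--Zygmund stopping-time construction organized around the medians of $g$; this is the real-valued prototype of the argument later run with the quasi-optimal center of oscillation. Fix a median $m(g;Q^0)$ and write $\lambda:=2^{-d-2}$. The conceptual seed is that a median is automatically a \emph{near-optimal} center of oscillation: for every cube $Q$ and every median $m(g;Q)$ one has
\be
\measure{\{x\in Q:\abs{g(x)-m(g;Q)}>2\omega_\lambda(g;Q)\}}\leq\lambda\measure{Q}.
\ee
Indeed, if $c\in\mathbb{R}$ nearly attains the infimum defining $\omega_\lambda(g;Q)$, then $\abs{g-c}\leq\omega_\lambda(g;Q)$ holds, up to an arbitrarily small error, on a subset of $Q$ of relative measure $\geq1-\lambda>\tfrac12$; since every median of $g$ on $Q$ must lie in any interval capturing more than half of the values of $g$ on $Q$, we get $\abs{m(g;Q)-c}\leq\omega_\lambda(g;Q)$ up to the same error, and the triangle inequality gives the displayed bound. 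This factor $2$ is what turns the sharp constant $1$ into the $4$ and $2$ of the theorem.

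\emph{Construction of $\cs$ and sparseness.} Starting with $Q=Q^0$, and then for every cube $Q$ already selected, declare the next generation of selected cubes inside $Q$ to be the \emph{maximal} dyadic cubes $Q'\subsetneq Q$ such that
\be
\measure{\{x\in Q':\abs{g(x)-m(g;Q)}>2\omega_\lambda(g;Q)\}}\geq2^{-d-1}\measure{Q'},
\ee
and iterate inside each such $Q'$; let $\cs$ be the family of all cubes selected at some generation $\geq1$, which are proper dyadic subcubes of $Q^0$, nested in the tree sense. Sparseness is immediate: the selected children of a fixed $Q$ are disjoint, so summing the selection inequality over them and using the near-optimality bound gives $2^{-d-1}\sum_{Q'}\measure{Q'}\leq\lambda\measure{Q}=2^{-d-2}\measure{Q}$, hence $\sum_{Q'}\measure{Q'}\leq\tfrac12\measure{Q}$. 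Taking $E(Q):=Q\setminus\bigcup\{\text{first-generation selected cubes inside }Q\}$ yields $\measure{E(Q)}\geq\tfrac12\measure{Q}$, and the tree structure makes the sets $E(Q)$, $Q\in\cs$, pairwise disjoint, so $\cs$ is pairwise nearly disjoint with parameter $\tfrac12$.

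\emph{Median jumps and the pointwise bound.} If $Q'$ is a first-generation selected cube inside $Q$, then $\abs{m(g;Q')-m(g;Q)}\leq2\omega_\lambda(g;Q)$: the dyadic parent $P$ of $Q'$ (with $Q'\subseteq P\subseteq Q$) is not selected, so by the selection inequality and $\measure{P}=2^{d}\measure{Q'}$ the set where $\abs{g-m(g;Q)}\leq2\omega_\lambda(g;Q)$ occupies more than half of $Q'$ (if $P=Q$ one uses the near-optimality bound instead), and this forces $m(g;Q')$ into $[m(g;Q)-2\omega_\lambda(g;Q),\,m(g;Q)+2\omega_\lambda(g;Q)]$. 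Now fix $x\in Q^0$ that is a Lebesgue point of $g$ and of the (countably many) relevant indicator functions. The set of points contained in infinitely many selected cubes is null, since sparseness forces the $k$-th generation of $\cs$ to have total measure $\leq2^{-k}\measure{Q^0}$; so for a.e.\ $x$ the chain of selected cubes through $x$ is finite, say $Q^0=:Q_0\supsetneq Q_1\supsetneq\cdots\supsetneq Q_N$. Since $x$ lies in no selected cube below $Q_N$, the Lebesgue density at $x$ of $\{\abs{g-m(g;Q_N)}>2\omega_\lambda(g;Q_N)\}$ is $\leq2^{-d-1}<1$ (by the selection inequality on the non-selected dyadic cubes shrinking to $x$), whence $\abs{g(x)-m(g;Q_N)}\leq2\omega_\lambda(g;Q_N)$; telescoping,
\ba
\abs{g(x)-m(g;Q^0)}&\leq&\abs{g(x)-m(g;Q_N)}+\sum_{j=1}^{N}\abs{m(g;Q_j)-m(g;Q_{j-1})}\\
&\leq&2\omega_\lambda(g;Q_N)+2\omega_\lambda(g;Q^0)+2\sum_{j=1}^{N-1}\omega_\lambda(g;Q_j),
\ea
and bounding $\omega_\lambda(g;Q_N)$ and $\omega_\lambda(g;Q^0)$ by $\sup_{x\in Q\subseteq Q^0,\,Q\text{ dyadic}}\omega_\lambda(g;Q)$ and the last sum by $\sum_{Q\in\cs}\omega_\lambda(g;Q)1_Q(x)$ gives the asserted inequality with constants $4$ and $2$.

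\emph{Main obstacle.} The step that needs the most care is calibrating the stopping rule: the threshold $2\omega_\lambda(g;Q)$, the selection portion $2^{-d-1}$, and the value $\lambda=2^{-d-2}$ must be chosen so that a single chain of inequalities simultaneously gives the sparseness constant $\tfrac12$, forces the median jump below $2\omega_\lambda(g;Q)$ (which costs a dimensional factor $2^{d}$ in passing from a non-selected dyadic parent to its child), and keeps the terminal Lebesgue density strictly below $1$. Apart from this balancing act and the two clean Lebesgue-differentiation inputs — the near-optimality bound for the median and the density argument at the terminal cube — the remainder is bookkeeping, including tracking the factor-$2$ losses incurred by using the median in place of the true minimizing center of $\omega_\lambda$.
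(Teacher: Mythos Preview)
The paper does not prove Theorem~\ref{lernersoriginal}; it is merely recalled from Lerner's work \cite{lerner2010,arXiv:1202.1860}. Your argument is correct and is precisely Lerner's stopping-time scheme: medians as near-optimal centers, a Calder\'on--Zygmund selection calibrated to produce the parameter $\tfrac12$, median-jump control via the unselected parent, and a Lebesgue density argument at the terminal cube.

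For comparison, the paper's own contribution is the vector-valued analogue (Theorem~\ref{lernersformulaend}), and its proof runs the same machine with two cosmetic differences. First, the paper's stopping rule (Lemma~\ref{keylemma}) selects a cube when \emph{some child} has large bad portion, which directly forces the selected cube itself to have small bad portion and lets Lemma~\ref{3rproperty} handle both the jump and the terminal estimates via Fujii's lemma; you instead select cubes by their own portion and pass to the parent for the jump bound --- equivalent bookkeeping. Second, the paper explicitly notes that it \emph{estimates first and then iterates} the inequality~\eqref{domination}, thereby eliminating the local sharp maximal term $\sup_{Q\ni x}\omega_\lambda(g;Q)$ altogether (at the cost of the constant $12$); you, like Lerner's original, telescope along the chain at each point, which is why the $4\sup$ term appears --- exactly matching the statement you were asked to prove.
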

\begin{rem} The collection $\cs$ in the statement of Theorem \ref{lernersoriginal} is in fact pairwise nearly disjoint with $E(Q):=Q\setminus\bigcup_{Q' : Q'\subsetneq Q} Q'$ as the set $E(Q)$ in the definition of a pairwise nearly disjoint collection, Definition \ref{nearlydisjoint}. 
\end{rem}
We are seeking a generalization of Lerner's local oscillation decomposition that would hold for vector-valued functions. The notion of the local oscillation $\omega_\lambda(f;Q)$ carries over for a vector-valued function simply by regarding the real line $(\mathbb{R},\abs{\,\cdot\,})$ as a particular instance of a Banach space $(E,\norm{\,\cdot\,})$. 

Behind the notion of the local oscillation is the following idea. We try to cover with a ball $B_E(c,r)$ the values that a function $f|_Q:Q\to E$ attains over a cube $Q$. We try to choose the center $c$ and the radius $r$ so that the radius $r$ is as small as possible. (If we regard the domain $Q$ as the time and the target space $E$ as the space, then the center $c$ can be thought of as the center of oscillation and the radius $r$ as the maximum of the amplitude about the center $c$ of oscillation over time $Q$.) How big do we expect the radius $r$ to be? It depends on whether we consider the whole cube or only a part of it. On one extreme, if we do not disregard any part of the cube and consider the image of the whole cube, then the radius $r$ is likely to be very large, and on the other extreme, if we disregard the whole cube and consider the image of an empty set, then the radius $r$ is zero. Therefore there is a trade off between the radius $r$ of the ball and the portion of the cube that we disregard. (The maximum of the amplitude over just a moment may be small, whereas the maximum of the amplitude over all times may be large.) To make the notion precise we introduce the following definition.

\begin{defn}
\label{leastbound}
{\it The least bound of $f$ (about the origin) on $Q$ with a $\lambda$-portion disregarded}, denoted by $\thesymbolforalpha_\lambda(f;Q)$, is defined as
\be
\thesymbolforalpha_\lambda(f;Q):=\min\{r\geq 0 : \frac{\measure{Q\cap\{f\notin B_E(0,r)\}}}{\measure{Q}}\leq\lambda\}.
\ee

\end{defn}

\begin{rem}
Recall that {\it the decreasing rearrangement of $f$}, denoted by $\norm{f}^*$, is defined by $\norm{f}^*(t):=\min \{\thesymbolforalpha\geq 0: \measure{\{\norm{f}>\thesymbolforalpha\}}\leq t\}$. Note that $$\thesymbolforalpha_\lambda(f;Q)=\norm{f1_Q}^*(\lambda\abs{Q}).$$
\end{rem}

\begin{rem}
Observe that 
\be
\thesymbolforalpha_\lambda(f-c;Q)=\min\{r\geq 0 : \frac{\measure{Q\cap\{f\notin B_E(c,r)\}}}{\measure{Q}}\leq\lambda\}
\ee
and
\be
\omega_\lambda(f;Q)=\inf_{c\in E} \thesymbolforalpha_\lambda(f-c;Q).
\ee
Hence it is natural to call $\thesymbolforalpha_\lambda(f-c;Q)$ {\it the least bound of $f$ about the center $c$ on $Q$ with a $\lambda$-portion disregarded} and $\omega_\lambda(f;Q)$ {\it the optimal oscillatory bound of $f$ on $Q$ with a $\lambda$-portion disregarded}.  
\end{rem}

The median and its properties are pivotal in the proof of Lerner's local oscillation decomposition formula \cite{lerner2010,arXiv:1202.1860}. The notion of a median is based on the ordering of the real numbers. Therefore we do not know {\it a priori}, whether the notion of a median has a vector-valued counterpart. However, we recall the following suggestive lemma \cite[Equation (4.2)]{arXiv:1202.1860}.
\begin{lemma}
\label{medianoptimal}
Let $\lambda\in[0,\oh)$. Suppose that $g:\rn\to\mathbb{R}$ is Lebesgue measurable. Then 
\be
\thesymbolforalpha_\lambda(g-m(g;Q);Q)\leq 2 \omega_\lambda(g;Q)
\ee
\end{lemma}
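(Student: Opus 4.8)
The plan is to prove the sharper pointwise statement that for \emph{every} center $c\in\rone$ the median $m:=m(g;Q)$ satisfies $\abs{m-c}\le\thesymbolforalpha_\lambda(g-c;Q)$; once this is in hand, a one-line triangle-inequality argument upgrades it to $\thesymbolforalpha_\lambda(g-m;Q)\le 2\,\thesymbolforalpha_\lambda(g-c;Q)$, and taking the infimum over $c$ and recalling $\omega_\lambda(g;Q)=\inf_{c\in\rone}\thesymbolforalpha_\lambda(g-c;Q)$ finishes the proof. (If $\thesymbolforalpha_\lambda(g-c;Q)=\infty$ for every $c$ the asserted inequality is trivial, so we may restrict attention to a $c$ with $\thesymbolforalpha_\lambda(g-c;Q)<\infty$.)

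\textbf{Step 1: the median is close to every center.} Fix $c\in\rone$ and set $r:=\thesymbolforalpha_\lambda(g-c;Q)$. By the definition of $\thesymbolforalpha_\lambda$ the set $A:=\{x\in Q:\abs{g(x)-c}\le r\}$ obeys $\measure{A}\ge(1-\lambda)\measure{Q}$, and since $\lambda<\oh$ (and $\measure{Q}>0$) this is in fact the \emph{strict} inequality $\measure{A}>\oh\measure{Q}$. Suppose toward a contradiction that $m>c+r$. Then every $x\in A$ has $g(x)\le c+r<m$, so $A\subseteq\{x\in Q:g(x)<m\}$, whence $\measure{\{x\in Q:g(x)<m\}}\ge\measure{A}>\oh\measure{Q}$, contradicting the defining property of a median. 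Symmetrically, $m<c-r$ would force $A\subseteq\{x\in Q:g(x)>m\}$ and hence $\measure{\{x\in Q:g(x)>m\}}>\oh\measure{Q}$, again a contradiction. Therefore $\abs{m-c}\le r$.

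\textbf{Step 2: conclusion.} On $A$ we have $\abs{g(x)-m}\le\abs{g(x)-c}+\abs{c-m}\le r+r=2r$, so $Q\cap\{\abs{g-m}>2r\}\subseteq Q\setminus A$ and thus $\measure{Q\cap\{\abs{g-m}>2r\}}\le\measure{Q}-\measure{A}\le\lambda\measure{Q}$. By the definition of $\thesymbolforalpha_\lambda$ this gives $\thesymbolforalpha_\lambda(g-m;Q)\le 2r=2\,\thesymbolforalpha_\lambda(g-c;Q)$. Taking the infimum over all $c\in\rone$ yields $\thesymbolforalpha_\lambda(g-m(g;Q);Q)\le 2\,\omega_\lambda(g;Q)$.

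The only delicate point is the \emph{strict} measure inequality $\measure{A}>\oh\measure{Q}$ used in Step 1: this is exactly where the hypothesis $\lambda<\oh$ enters, and it is what makes the two median inequalities incompatible with $m$ lying outside the interval $[c-r,c+r]$. Everything else is bookkeeping with the definition of $\thesymbolforalpha_\lambda$ and the scalar triangle inequality, so I do not anticipate any genuine obstacle; in particular this argument is intrinsically one-dimensional (it uses the order of $\rone$ through the median), which is precisely why later sections must introduce a substitute notion for vector-valued $f$.
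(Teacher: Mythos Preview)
Your proof is correct. The paper does not actually supply a proof of this lemma: it is stated as a recalled fact from Lerner \cite[Equation (4.2)]{arXiv:1202.1860} and used only as motivation for the vector-valued Definition~\ref{oco}. Your argument---show $\abs{m(g;Q)-c}\le\thesymbolforalpha_\lambda(g-c;Q)$ from the order-based median inequalities and the strict bound $(1-\lambda)\measure{Q}>\tfrac12\measure{Q}$, then apply the triangle inequality and take the infimum over $c$---is the standard one, and it is precisely the scalar template that the paper's Lemma~\ref{3rproperty} and Lemma~\ref{keylemma2} later abstract to the Banach-space setting (with the constant $2$ replaced by $3$ or $4$ because the order argument is no longer available).
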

Recall the discussion preceding Definition \ref{leastbound}. The geometric content of Lemma \ref{medianoptimal} is as follows. Suppose that we try to cover with a ball (with a radius as small as possible) the image of a cube (under a function). Assume that we allow the image of a tiny portion of the cube to lie outside of the ball. Then the ball centered at a median has a radius that is within a multiple of $2$ of the optimal radius. Inspired by this observation we define a vector-valued counterpart of a median as follows.
\begin{defn} Let $f:\rn\to E$ be a Lebesgue measurable function. 
{\it A quasi-optimal center of oscillation of $f$ on $Q$ with a $\lambda$-portion disregarded} or {\it a $\lambda$-pseudomedian of $f$ on $Q$}, denoted by $c_\lambda(f;Q)$, is defined as any vector $c_\lambda(f;Q)\in E$ such that
\be
\thesymbolforalpha_\lambda(f-c_\lambda(f;Q);Q)\leq 2 \omega_\lambda(f;Q)
\ee
\end{defn}
Next we check that for each Lebesgue measurable vector-valued function there exists a quasi-optimal center of oscillation.
If $\omega_\lambda(f;Q)>0$, then by the definition of the greatest lower bound there exists a vector $c\in E$ such that $\thesymbolforalpha_\lambda(f-c;Q)\leq 2 \inf_{c\in E} \thesymbolforalpha_\lambda(f-c;Q) = 2\omega_\lambda(f;Q)$. If $\omega_\lambda(f;Q)=0$, then the existence of $c\in E$ such that $\thesymbolforalpha_\lambda(f-c;Q)=0$ is assured by the following lemma.

\begin{lem}\label{existenceoco} Let $\lambda\in[0,\oh)$. If $\omega_\lambda(f;Q)=0$, then there exists a vector $c\in E$ such that $\thesymbolforalpha_\lambda(f-c;Q)=0$.
\end{lem}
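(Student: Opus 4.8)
The plan is to use crucially the assumption $\lambda<\oh$: it guarantees that the sublevel sets occurring in the definition of $\thesymbolforalpha_\lambda$ occupy more than half of $Q$, so that those arising from a minimizing sequence of centers must pairwise overlap. Completeness of $E$ then produces a limiting center, and a continuity-from-above argument upgrades ``approximately constant on a large part of $Q$'' to ``exactly constant on a large part of $Q$''. Concretely, I would start from $\omega_\lambda(f;Q)=\inf_{c\in E}\thesymbolforalpha_\lambda(f-c;Q)=0$ and choose a sequence $(c_n)_n$ in $E$ with $r_n:=\thesymbolforalpha_\lambda(f-c_n;Q)\to 0$. By the definition of $\thesymbolforalpha_\lambda$ as a minimum, the measurable sets $A_n:=\{x\in Q:\norm{f(x)-c_n}\leq r_n\}$ satisfy $\measure{A_n}\geq(1-\lambda)\measure{Q}$.

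Next I would show that $(c_n)_n$ is Cauchy. Since $\lambda<\oh$, for any $n,m$ we have $\measure{A_n\cap A_m}\geq\measure{A_n}+\measure{A_m}-\measure{Q}\geq(1-2\lambda)\measure{Q}>0$; in particular $A_n\cap A_m$ is nonempty, and choosing any $x$ in it and applying the triangle inequality gives $\norm{c_n-c_m}\leq\norm{c_n-f(x)}+\norm{f(x)-c_m}\leq r_n+r_m$. As $r_n\to 0$, this makes $(c_n)_n$ a Cauchy sequence in $E$, so by completeness it converges to some $c\in E$.

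It then remains to check that $c$ works. On $A_n$ we have $\norm{f(x)-c}\leq\norm{f(x)-c_n}+\norm{c_n-c}\leq\epsilon_n$ with $\epsilon_n:=r_n+\norm{c_n-c}\to 0$. Hence, for each $k$, choosing $n$ with $\epsilon_n\leq 1/k$ shows that the set $C_k:=\{x\in Q:\norm{f(x)-c}\leq 1/k\}$ contains $A_n$, so $\measure{C_k}\geq(1-\lambda)\measure{Q}$. The sets $C_k$ decrease to $\{x\in Q:f(x)=c\}$ and all lie in $Q$, so continuity from above of the Lebesgue measure gives $\measure{\{x\in Q:f(x)=c\}}\geq(1-\lambda)\measure{Q}$, that is, $\measure{\{x\in Q:\norm{f(x)-c}>0\}}\leq\lambda\measure{Q}$. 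Thus $r=0$ already satisfies the inequality defining $\thesymbolforalpha_\lambda(f-c;Q)$, and therefore $\thesymbolforalpha_\lambda(f-c;Q)=0$.

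I do not expect a genuine obstacle: the proof is a short pigeonhole-plus-completeness argument. The only point that needs a little care is the passage in the last paragraph from centers $c_n$ that work only approximately to a single center $c$ that works exactly --- this is precisely where the completeness of the Banach space $E$ and the continuity from above of the measure enter, and it relies on the same overlap phenomenon (hence on $\lambda<\oh$) that made the $c_n$ Cauchy.
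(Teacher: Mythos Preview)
Your proof is correct and follows essentially the same approach as the paper: pick a minimizing sequence $(c_n)$, use $\lambda<\tfrac12$ to force the large sublevel sets to overlap and hence make $(c_n)$ Cauchy, pass to the limit $c$ by completeness, and then use a continuity-of-measure argument to conclude $\rho_\lambda(f-c;Q)=0$. The only cosmetic difference is that the paper phrases the final step via continuity from below on the increasing sets $\{\norm{f-c}>1/k\}$, whereas you take the complementary route via continuity from above on the decreasing sets $C_k$; these are equivalent.
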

\begin{proof}
Suppose that $\omega_\lambda(f;Q)=0$. Recall that by definition $\omega_\lambda(f;Q)=\inf_{c\in E} \thesymbolforalpha_\lambda(f-c;Q)$. By the definition of the greatest lower bound there exists a sequence of vectors $(c_n)\subset E$ such that $\lim_{n\to\infty}\thesymbolforalpha_\lambda(f-c_n;Q)=0$. For further convenience let $\thesymbolforalpha_n:=\thesymbolforalpha_\lambda(f-c_n;Q)$. By the definition of $\thesymbolforalpha_\lambda(f-c_n;Q)$ we have $\measure{Q\cap\{\norm{f-c_n}>\thesymbolforalpha_n\}}\leq \lambda\measure{Q}$. This together with the assumption $0\leq\lambda<\oh$ implies that 
\be
\abs{Q\cap\{\norm{f-c_n}\leq \thesymbolforalpha_n\}\cap\{\norm{f-c_m}\leq \thesymbolforalpha_m\}}\geq (1-2\lambda)\abs{Q}>0.
\ee
It follows that there exists $x_0\in Q$ such that both $ \norm{f(x_0)-c_n}\leq \thesymbolforalpha_n$ and $ \norm{f(x_0)-c_m}\leq \thesymbolforalpha_m$. By the triangle inequality $\norm{c_n-c_m}\leq \norm{f(x_0)-c_n}+\norm{f(x_0)-c_m}\leq\thesymbolforalpha_n+\thesymbolforalpha_m.$ Therefore $(c_n)$ is a Cauchy sequence. Thus by the completeness of $E$ there exists $c\in E$ such that $\lim_{n\to\infty}\norm{c_n-c}=0$. For each integer $k$ choose an integer $n_k$ so large that both $\norm{c-c_{n_k}}\leq \frac{1}{2k}$ and $\thesymbolforalpha_{n_k}\leq\frac{1}{2k}$. Let $x\in\rn$. Now if $\norm{f(x)-c_{n_k}}\leq\thesymbolforalpha_{n_k}$, then $\norm{f(x)-c}\leq\norm{f(x)-c_{n_k}}+\norm{c-c_{n_k}}\leq \thesymbolforalpha_{n_k}+\frac{1}{2k}\leq \frac{1}{2k}+\frac{1}{2k} \leq \frac{1}{k}$. By contrapositive, if $\norm{f(x)-c}>\frac{1}{k}$, then $\norm{f(x)-c_{n_k}}>\thesymbolforalpha_{n_k}$. Altogether we have
\ba 
\measure{Q\cap\{\norm{f-c}>0\}}&=&\limsup_{k\to\infty}\measure{Q\cap\{\norm{f-c}>\frac{1}{k}\}}\nonumber\\
&\leq&\limsup_{k\to\infty}\measure{Q\cap\{\norm{f-c_{n_k}}>\thesymbolforalpha_{n_k}\}}\nonumber\\
&\leq&\lambda\abs{Q}.
\ea
This together with the definition $\thesymbolforalpha_\lambda(f-c)=\min\{\thesymbolforalpha\geq 0:\abs{Q\cap\{\norm{f-c}>\thesymbolforalpha\}}\leq\lambda\abs{Q}\}$ implies that $\thesymbolforalpha_\lambda(f-c) = 0$.
 \end{proof}

\begin{lemma}\label{3rproperty} Let $\lambda\in[0,\oh)$. Suppose that $\m$ is a quasi-optimal center of oscillation. Let $c\in E$ and $r>0$. Then if $\measure{Q\cap\{f\notin B_E(c,r)\}}\leq \lambda \measure{Q},$ then $\m \in B_E(c,3r).$

\end{lemma}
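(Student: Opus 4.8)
The plan is to mimic, almost verbatim, the overlap argument used in the proof of Lemma \ref{existenceoco}: the point is simply that when $\lambda<\oh$, two exceptional sets each of relative measure at most $\lambda$ in $Q$ cannot cover $Q$, so their complements must meet. Throughout I use that $\measure{Q}>0$ (implicit already in the definitions of $\omega_\lambda$ and $\thesymbolforalpha_\lambda$, which divide by $\measure{Q}$).

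First I would rephrase the hypothesis in terms of the least bound. The assumption $\measure{Q\cap\{f\notin B_E(c,r)\}}\leq\lambda\measure{Q}$ says exactly that $r$ lies in the set over which the minimum defining $\thesymbolforalpha_\lambda(f-c;Q)$ is taken, hence $\thesymbolforalpha_\lambda(f-c;Q)\leq r$, and therefore $\omega_\lambda(f;Q)=\inf_{c'\in E}\thesymbolforalpha_\lambda(f-c';Q)\leq r$. Applying the defining inequality of a quasi-optimal center of oscillation, $\thesymbolforalpha_\lambda(f-\m;Q)\leq 2\,\omega_\lambda(f;Q)\leq 2r$; since $\thesymbolforalpha\mapsto\measure{Q\cap\{\norm{f-\m}>\thesymbolforalpha\}}$ is nonincreasing, this gives $\measure{Q\cap\{\norm{f-\m}>2r\}}\leq\lambda\measure{Q}$.

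Next I would intersect the two good sets. Passing to complements inside $Q$, both $\measure{Q\cap\{\norm{f-c}\leq r\}}\geq(1-\lambda)\measure{Q}$ and $\measure{Q\cap\{\norm{f-\m}\leq 2r\}}\geq(1-\lambda)\measure{Q}$ hold, so their intersection has measure at least $(1-2\lambda)\measure{Q}>0$ because $\lambda<\oh$. Hence there is a point $x_0\in Q$ with simultaneously $\norm{f(x_0)-c}\leq r$ and $\norm{f(x_0)-\m}\leq 2r$, and the triangle inequality yields $\norm{c-\m}\leq\norm{f(x_0)-c}+\norm{f(x_0)-\m}\leq 3r$, i.e. $\m\in B_E(c,3r)$ since the ball is closed.

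I do not expect a real obstacle here; the only points needing care are bookkeeping ones: that $\thesymbolforalpha_\lambda(f-c;Q)$ is attained as a genuine minimum (so $\thesymbolforalpha_\lambda(f-c;Q)\leq r$ truly follows from the hypothesis), that the distribution function is monotone (so control at level $2\,\omega_\lambda(f;Q)$ descends to level $2r$), and that $\measure{Q}>0$ so the overlap set is nonempty. All three are immediate from the definitions of Section \ref{sec:lernersformula}.
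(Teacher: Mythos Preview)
Your argument is correct and is essentially identical to the paper's own proof: both deduce $\thesymbolforalpha_\lambda(f-c;Q)\le r$, feed this through $\omega_\lambda$ and the defining inequality of $\m$ to get $\measure{Q\cap\{\norm{f-\m}>2r\}}\le\lambda\measure{Q}$, then use the $(1-2\lambda)\measure{Q}>0$ overlap of the two good sets to find a common point and finish with the triangle inequality. The only difference is cosmetic---you spell out the monotonicity of the distribution function and the fact that $\thesymbolforalpha_\lambda$ is an attained minimum, whereas the paper leaves these implicit.
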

\begin{rem}
In the notation of a decreasing rearrangement, the lemma states that \begin{equation}\label{3rpropertya}\norm{c-\m}\leq 3\left(\norm{f-c}1_Q\right)^*(\lambda \measure{Q}).
\end{equation}
In particular for $c=0$ we obtain
\begin{equation}\label{3rpropertyb}
\norm{\m}\leq 3\left(\norm{f}1_Q\right)^*(\lambda \measure{Q}).
\end{equation} 
The equations \eqref{3rpropertyb} and \eqref{3rpropertya} are in fact equivalent because each quasi-optimal center of oscillation has the additivity property $c+c_\lambda(f;Q)=c_\lambda(f+c\,;Q)$, which follows from  $$\thesymbolforalpha_\lambda\left(f+c-(c+c_\lambda(f;Q));Q\right)=\thesymbolforalpha_\lambda(f-(c-c)-c_\lambda(f;Q);Q)\leq 2 \omega_\lambda(f;Q)=2\omega_\lambda(f+c;Q).$$ 
\end{rem}
\begin{proof}[Proof of Lemma \ref{3rproperty}] 
 Recall that by definition $$\thesymbolforalpha_\lambda(f-c;Q)=\min\{\thesymbolforalpha \geq 0: \measure{Q\cap\{\norm{f-c}>\thesymbolforalpha\}}\leq \lambda \measure{Q}\}$$ and that by definition $\omega_\lambda(f;Q)=\inf_{c'\in E}\thesymbolforalpha_\lambda(f-c';Q)$. Observe that the definitions and the assumption imply that $\omega_\lambda(f;Q)\leq\thesymbolforalpha_\lambda(f-c;Q)\leq r$. Recall that by the definition of $c_\lambda(f;Q)$ we have $\thesymbolforalpha_\lambda(f-c_\lambda(f;Q);Q)\leq 2\omega_\lambda(f;Q)$. Thus altogether we have $\thesymbolforalpha_\lambda(f-c_\lambda(f;Q);Q)\leq 2r$. This together with the definition of $\thesymbolforalpha_\lambda(f-c_\lambda(f;Q);Q)$ implies that 
 \be
 \measure{Q\cap\{\norm{f-c_\lambda(f;Q)}>2r\}}\leq  \lambda\abs{Q}.
 \ee
From this and the assumptions it follows that

 \be
  \measure{Q\cap\{\norm{f-c}\leq r\}\cap\{\norm{f-c_\lambda(f;Q)}\leq 2r\}}\geq (1-2\lambda)\measure{Q}>0
  \ee
Thus there exists $x_0\in Q$ such that both $\norm{f(x_0)-c}\leq r$ and $\norm{f(x_0)-\m}\leq 2r$. Hence by the triangle inequality $\norm{\m-c}\leq \norm{f(x_0)-c} +\norm{f(x_0)-\m}\leq 3r$.
\end{proof}

Next we state an analogue of the Lebesgue differentiation theorem. Whereas in the Lebesgue differentiation theorem we approximate $f(x)$ by the integral avarages $\frac{1}{\measure{Q}}\int_{Q}f$, in this analogous theorem we approximate $f(x)$ by $\m$. The theorem was proven by Fujii for medians \cite{fujii1991}.

\begin{lemma}[Fujii's Lemma]\label{fuji}Let $\lambda\in(0,\oh)$. Suppose that $f:\rn\to E$ is a strongly measurable function. Then $\lim_{\substack{Q \text{ a cube,}\\Q\ni x,  \abs{Q}\to 0}}\m=f(x)$ for almost every $x\in\rn$.\end{lemma}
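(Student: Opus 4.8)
The plan is to reduce the statement to the scalar Lebesgue differentiation theorem applied to a countable family of real-valued functions, exploiting the essential separability of $f$. First I would fix a countable dense subset $D=\{v_j\}_{j\in\mathbb N}$ of the essential range of $f$; after discarding a null set we may assume $f(\rn)\subset\overline{D}$. For each $j$ consider the scalar function $x\mapsto\norm{f(x)-v_j}$, which is strongly (hence Lebesgue) measurable and locally integrable once we also localize $f$ to a large cube (the statement is local, so we may assume $f\in L^1$; or argue on each cube $[-N,N]^d$ separately). By the classical Lebesgue differentiation theorem, for almost every $x$ we have $\fint_Q\norm{f-v_j}\to\norm{f(x)-v_j}$ as $Q\ni x$, $\abs Q\to0$, simultaneously for all $j$ (a countable intersection of full-measure sets is full-measure). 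Let $x$ be such a point, and additionally (still a.e.) a point where $\norm{f(x)-v_j}$ is arbitrarily small for a suitable choice of $j$, i.e. where $x$ is a Lebesgue point making $f(x)$ genuinely in $\overline D$.

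Next I would estimate $\norm{c_\lambda(f;Q)-f(x)}$. Fix $\varepsilon>0$ and pick $j$ with $\norm{f(x)-v_j}<\varepsilon$. On one hand, by Lemma \ref{3rproperty} (in the form \eqref{3rpropertya}) applied with $c=v_j$,
\be
\norm{v_j-c_\lambda(f;Q)}\leq 3\bigl(\norm{f-v_j}1_Q\bigr)^*(\lambda\abs Q)\leq \frac{3}{\lambda}\fint_Q\norm{f-v_j},
\ee
where the last inequality is property (i) of the decreasing rearrangement with $p=1$. On the other hand, by the triangle inequality,
\be
\norm{c_\lambda(f;Q)-f(x)}\leq \norm{c_\lambda(f;Q)-v_j}+\norm{v_j-f(x)}\leq \frac{3}{\lambda}\fint_Q\norm{f-v_j}+\varepsilon.
\ee
Now $\fint_Q\norm{f-v_j}\to\norm{f(x)-v_j}<\varepsilon$ as $Q\ni x$, $\abs Q\to0$, so $\limsup_{Q\ni x,\abs Q\to0}\norm{c_\lambda(f;Q)-f(x)}\leq \tfrac{3\varepsilon}{\lambda}+\varepsilon$. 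Since $\varepsilon>0$ is arbitrary, $c_\lambda(f;Q)\to f(x)$ at every such point $x$, i.e. for a.e. $x$.

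The only mild subtlety — and the place I would be most careful — is the reduction to $f\in L^1_{\mathrm{loc}}$ and the simultaneous choice of Lebesgue points: one must verify that the full-measure set obtained is independent of the cube sequence and of $j$, which is why the countability of $D$ is essential (without separability of the range one could not pass to a countable family). The use of Lemma \ref{3rproperty} is what makes the argument work uniformly over \emph{all} quasi-optimal centers $c_\lambda(f;Q)$ at once, since the estimate \eqref{3rpropertya} holds for any choice of pseudomedian; no appeal to the scalar notion of median is needed. Everything else is a routine application of the scalar Lebesgue differentiation theorem, so I do not anticipate a genuine obstacle.
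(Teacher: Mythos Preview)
Your approach is very close to the paper's and uses the same key ingredients: the essential separability of $f$ to produce a countable dense set $\{v_j\}$, and Lemma~\ref{3rproperty} to bound $\norm{c_\lambda(f;Q)-v_j}$ uniformly over all choices of pseudomedian. The one genuine weakness is your reduction to $f\in L^1_{\mathrm{loc}}$: strongly measurable does not imply locally integrable, and restricting attention to a large cube $[-N,N]^d$ does nothing to make $\norm{f-v_j}$ integrable there. Your inequality $(\norm{f-v_j}1_Q)^*(\lambda\abs{Q})\le\lambda^{-1}\fint_Q\norm{f-v_j}$ may therefore have an infinite right-hand side, and the Lebesgue differentiation theorem is not available.

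The paper sidesteps this entirely by using the Lebesgue \emph{density} theorem for sets rather than the Lebesgue differentiation theorem for functions. Instead of averaging $\norm{f-v_j}$, it considers the measurable sets $E^k_j:=\{\norm{f-v_j}\le 1/k\}$. At any density point $x$ of $E^k_j$, for all sufficiently small cubes $Q\ni x$ one has $\abs{Q\cap\{\norm{f-v_j}>1/k\}}\le\lambda\abs{Q}$, and then Lemma~\ref{3rproperty} gives $\norm{c_\lambda(f;Q)-v_j}\le 3/k$ directly, with no integrability hypothesis needed. Your argument is easily repaired along exactly these lines: replace the appeal to $\fint_Q\norm{f-v_j}$ by the density theorem applied to the level sets $\{\norm{f-v_j}\le\varepsilon\}$, and the rest of your write-up goes through unchanged.
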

\begin{proof}
We want to show that there exists a null set $N$ such that for all $\epsilon>0$ and for all $x\in\rn\setminus N$ there exists $\delta>0$ such that $\norm{f(x)-\m}<4\epsilon$ for all $\m$ whenever $Q$ is a cube, $Q\ni x$ and $\abs{Q}<\delta$.

Recall that, by definition, $f$ is strongly measurable, if and only if $f$ is both essentially separably valued and Lebesgue-measurable. There exists a null set $N_0$ and a dense countable subset $\{c_i\}_{i=1}^\infty$ of $f(\rn\setminus N_0)$, because $f$ is essentially separably valued. Define the sets $$E^k_j:=\{\norm{f-c_j}\leq\frac{1}{k}\}$$ for all $k,j\in\mathbb{N}_+$. For any fixed $k\in\mathbb{N}_+$ the sets $\{E^k_j\}_{j\in\mathbb{N}_+}$ cover $\rn\setminus N_0$ because $\{c_i\}_{i=1}^\infty$ is dense in $f(\rn\setminus N_0)$. The sets $\{E^k_j\}_{j,k\in\mathbb{N}_+}$  are Lebesgue-measurable, because $f$ is Lebesgue-measurable. 

By the Lebesgue Density Theorem for all $j,k\in\mathbb{N_+}$ there exists a null set $N^k_j\subset E^k_j$ such that for all $x\in E^k_j\setminus N^k_j$ there exists $\delta>0$ such that $\abs{Q\cap E^k_j}\geq(1-\lambda)\abs{Q}$ whenever $Q$ is a cube, $Q\ni x$ and $ \abs{Q}<\delta$.

Define the null set $N:=N_0\cup\bigcup_{j,k\in\mathbb{N}_+}N^k_j$.  Let $\epsilon>0$ and $x\in\rn\setminus N$. Fix a $k\in\mathbb{N_+}$ so large that $\frac{1}{k}<\epsilon$. Now there exists $j$ such that $x\in E^{k}_{j}$ and $\delta>0$ such that \begin{equation}\label{fujilemmavalivaihe}\abs{Q\cap\{\norm{f-c_{j}}\leq\frac{1}{k}\}}=\abs{Q\cap E^{k}_{j}}\geq(1-\lambda)\abs{Q}\end{equation} whenever $Q$ is a cube, $Q\ni x$ and $ \abs{Q}<\delta$. Let $Q$ be such a cube. Let $\m$ be a quasi-optimal center of oscillation. By Lemma \ref{3rproperty} together with the inequality \eqref{fujilemmavalivaihe} we have that $\norm{c_\lambda(f;Q)-c_{j}}\leq3\frac{1}{k}$. This together with the triangle inequality implies that $\norm{c_\lambda(f;Q)-f(x)}\leq \norm{c_\lambda(f;Q)-c_{j}}+ \norm{f(x)-c_{j}}\leq \frac{4}{k}< 4\epsilon.$
\end{proof}

Recall that a quasi-optimal center is defined as any vector $c_\lambda(f;Q)$ that satisfies $\thesymbolforalpha_\lambda(f-c_\lambda(f;Q);Q)\leq 2 \omega_\lambda(f;Q)$.  Roughly speaking, the following lemma says that a quasi-optimal center of oscillation over a cube with a large (but less than half) portion disregarded is also a quasi-optimal center of oscillation over the cube with a smaller portion disregarded.

\begin{lem}\label{keylemma2}Suppose that $0<\lambda\leq\kappa<\oh$. Let $c\in E$. Then for all quasi-optimal centers of oscillation $c_\kappa(f;Q)$ we have
\be 
\thesymbolforalpha_\lambda(f-c_\kappa(f;Q);Q)\leq 4\, \omega_\lambda(f;Q).
\ee
\end{lem}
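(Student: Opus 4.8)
The plan is to deduce the estimate from Lemma~\ref{3rproperty}, applied with the parameter $\kappa$ rather than $\lambda$, using the trivial observation that a subset of $Q$ occupying at most a $\lambda$-portion of $Q$ also occupies at most a $\kappa$-portion of $Q$ whenever $\lambda\le\kappa$. (The vector $c$ in the statement does not appear in the conclusion and will not be used.)

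First I would fix $\epsilon>0$ and, using $\omega_\lambda(f;Q)=\inf_{c'\in E}\thesymbolforalpha_\lambda(f-c';Q)$, choose $c''\in E$ with $\thesymbolforalpha_\lambda(f-c'';Q)\le\omega_\lambda(f;Q)+\epsilon$. Unwinding the definition of $\thesymbolforalpha_\lambda$, this means $\measure{Q\cap\{f\notin B_E(c'',\omega_\lambda(f;Q)+\epsilon)\}}\le\lambda\measure{Q}$, and since $\lambda\le\kappa$ the left-hand side is also $\le\kappa\measure{Q}$. Now I would invoke Lemma~\ref{3rproperty} with parameter $\kappa$, center $c''$, and radius $r=\omega_\lambda(f;Q)+\epsilon$: it gives $c_\kappa(f;Q)\in B_E(c'',3(\omega_\lambda(f;Q)+\epsilon))$, i.e.\ $\norm{c''-c_\kappa(f;Q)}\le 3(\omega_\lambda(f;Q)+\epsilon)$.

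The last step is a triangle-inequality/contraposition argument: for every $x\in Q$ with $\norm{f(x)-c''}\le\omega_\lambda(f;Q)+\epsilon$ we get $\norm{f(x)-c_\kappa(f;Q)}\le 4(\omega_\lambda(f;Q)+\epsilon)$, so $Q\cap\{\norm{f-c_\kappa(f;Q)}>4(\omega_\lambda(f;Q)+\epsilon)\}\subset Q\cap\{\norm{f-c''}>\omega_\lambda(f;Q)+\epsilon\}$, a set of measure $\le\lambda\measure{Q}$. By the definition of $\thesymbolforalpha_\lambda$ this yields $\thesymbolforalpha_\lambda(f-c_\kappa(f;Q);Q)\le 4(\omega_\lambda(f;Q)+\epsilon)$, and letting $\epsilon\to 0$ gives the claim.

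I do not expect a genuine analytic difficulty; the delicate points are organizational. The first is that one must take $c''$ \emph{near-optimal}, with $\thesymbolforalpha_\lambda(f-c'';Q)$ close to $\omega_\lambda(f;Q)$, rather than use an arbitrary $\lambda$-quasi-optimal center (which only satisfies $\thesymbolforalpha_\lambda(f-c_\lambda(f;Q);Q)\le 2\omega_\lambda(f;Q)$ and would deliver the weaker bound $8\omega_\lambda(f;Q)$). The second is that Lemma~\ref{3rproperty} must be applied with parameter $\kappa$, so that the quasi-optimal center appearing in its conclusion is precisely $c_\kappa(f;Q)$, the vector we need to control; this is legitimate exactly because $\lambda\le\kappa<1/2$, which is where both hypotheses of the lemma are used.
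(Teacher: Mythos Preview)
Your proof is correct and follows essentially the same route as the paper: invoke Lemma~\ref{3rproperty} at level $\kappa$ to bound $\norm{c''-c_\kappa(f;Q)}$ by $3r$, then use the triangle inequality and contraposition to pass from a $\lambda$-bound for $f-c''$ to a $\lambda$-bound for $f-c_\kappa(f;Q)$. The only cosmetic difference is that the paper works with an arbitrary $c\in E$ (this is what the otherwise unused $c$ in the statement is for), proves the intermediate inequality $\thesymbolforalpha_\lambda(f-c_\kappa(f;Q);Q)\le 4\,\thesymbolforalpha_\lambda(f-c;Q)$, and then takes the infimum over $c$ at the end, thereby avoiding your $\epsilon$-argument.
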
 

\begin{proof}
Let $0<\lambda\leq\kappa<\oh$. Suppose that $c\in E$. Let $c_\kappa(f;Q)$ be a quasi-optimal center of oscillation. Since the function $f$ and the cube $Q$ are fixed for this proof, we suppress "$f$" and "$Q$" in the notation by denoting $c_\kappa:=c_\kappa(f;Q)$, $c_\lambda:=c_\lambda(f;Q)$, $\thesymbolforalpha_\lambda(c_\kappa):= \thesymbolforalpha_\lambda(f-c_\kappa(f;Q);Q)$,  $\thesymbolforalpha_\lambda(c):= \thesymbolforalpha_\lambda(f-c;Q)$, and $\omega_\lambda:=\omega_\lambda(f;Q)$. By the definition of $\thesymbolforalpha_\lambda(c)$ and the assumption $\lambda\leq\kappa$ we have 
\be \measure{Q\cap\{\norm{f-c}>\thesymbolforalpha_\lambda(c)\}}\leq\lambda\abs{Q}\leq\kappa\measure{Q}.
\ee
Therefore Lemma \ref{3rproperty} together with the assumption $\kappa<\oh$ implies that 
\be
\norm{c-c_\kappa}\leq 3 \thesymbolforalpha_\lambda(c).
\ee
Now let $x\in\rn$. If $\norm{f(x)-c}\leq \thesymbolforalpha_\lambda(c)$, then $\norm{f(x)-c_\kappa}\leq \norm{f(x)-c}+\norm{c-c_\kappa(f;Q)}\leq \thesymbolforalpha_\lambda(c)+3\thesymbolforalpha_\lambda(c)= 4\thesymbolforalpha_\lambda(c)$. By contrapositive, if $\norm{f(x)-c_\kappa}>4\thesymbolforalpha_\lambda(c)$, then $\norm{f(x)-c}> \thesymbolforalpha_\lambda(c)$. This together with the definition of $\thesymbolforalpha_\lambda(c)$ implies that
\be
\measure{Q\cap\{\norm{f-c_\kappa}>4\thesymbolforalpha_\lambda(c)\}}\leq \measure{Q\cap\{\norm{f-c}>\thesymbolforalpha_\lambda(c)\}}\leq\lambda\measure{Q}.
\ee
Recall that by definition $\thesymbolforalpha_\lambda(c_\kappa)=\min\{\thesymbolforalpha\geq0: \abs{Q\cap\{\norm{f-c_\kappa}>\thesymbolforalpha\}} \leq \lambda\abs{Q}\}$. Hence 
\begin{equation}
\label{cgene1}
\thesymbolforalpha_\lambda(c_\kappa)\leq 4 \thesymbolforalpha_\lambda(c).
\end{equation}
Therefore by the definition of the greatest lower bound and by the definition $\omega_\lambda=\inf_{c\in E}\rho_\lambda(c)$ we obtain
$$
\thesymbolforalpha_\lambda(c_\kappa)\leq 4 \inf_{c\in E} \rho_\lambda(c)=4\,\omega_\lambda.
$$

\end{proof}

Now we are in position to prove a key lemma, from which the vector-valued generalization of Lerner's local oscillation decomposition formula follows by iterating.

\begin{lem}
\label{keylemma}
Suppose that $f:\rn\to E$ is a strongly measurable function. Let $0<\lambda\leq\kappa<\oh$. Suppose that $Q^0$ is a cube. Then there exists a (possibly empty) collection $\{Q^1_j\}$ of pairwise disjoint dyadic subcubes of $Q^0$ such that for all quasi-optimal centers of oscillation $c_\kappa(f;Q^0)$ and $c_\kappa(f;Q^1_j)$ and for almost every $x\in  Q^0\setminus\bigcup Q^1_j$ we have
\begin{subequations}\label{grp}
\begin{align}
\norm{c_\kappa(f;Q^0)-c_\kappa(f;Q^1_j)}&\leq 3\thesymbolforalpha_\lambda(f-c_\kappa(f;Q^0);Q^0),
\label{le1}\\
 \norm{f(x)-c_\kappa(f;Q^0)}&\leq 3\thesymbolforalpha_\lambda(f-c_\kappa(f;Q^0);Q^0),
 \label{le2}
\end{align}
\end{subequations}
and 
\begin{equation}
\label{le3}
\sum\abs{Q^1_j}\leq 2^d\frac{\lambda}{\kappa}\abs{Q^0}.
\end{equation}
Furthermore, we have the pointwise decomposition 

\begin{eqnarray}
\label{le4}
(f-c_{\kappa}(f;Q^0))1_{Q^0}&=&(f-c_{\kappa}(f;Q^0))1_{Q^0\setminus\bigcup_{j}Q^{1}_j}+\sum_{j}(c_{\kappa}(f;Q^{1}_j)-c_{\kappa}(f;Q^{0}))1_{Q^{1}_j}\nonumber\\
&&+\sum_{j}(f-c_{\kappa}(f;Q^{1}_j)1_{Q^{1}_j},
\end{eqnarray}
which, by the estimates \eqref{le1} and \eqref{le2}, yields almost everywhere the pointwise domination

\begin{equation}
\label{domination}
\norm{f-c_{\kappa}(f;Q^0)}1_{Q^0}\leq 3 \thesymbolforalpha_\lambda(f-c_\kappa(f;Q^0);Q^0)1_{Q^0}+\sum_{j} \norm{f-c_{\kappa}(f;Q^1_j)}1_{Q^1_j}.
\end{equation}
\end{lem}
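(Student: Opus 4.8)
The plan is to run a Calder\'on--Zygmund-type stopping-time decomposition of $Q^0$ with respect to the set on which $f$ deviates too far from its pseudomedian $c_\kappa(f;Q^0)$. Fix any quasi-optimal center $c_\kappa(f;Q^0)$ and abbreviate $\thesymbolforalpha_0:=\thesymbolforalpha_\lambda(f-c_\kappa(f;Q^0);Q^0)$. By the definition of $\thesymbolforalpha_\lambda$ (the defining minimum being attained, as usual) the set
\be
A:=\{x\in Q^0:\norm{f(x)-c_\kappa(f;Q^0)}>\thesymbolforalpha_0\}
\ee
is Lebesgue measurable ($f$ being strongly measurable) and satisfies $\measure{A}\leq\lambda\measure{Q^0}$. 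I would then let $\{Q^1_j\}$ be the \emph{maximal} dyadic subcubes $Q$ of $Q^0$ obeying the stopping inequality $\measure{Q\cap A}>2^{-d}\kappa\,\measure{Q}$; by maximality these are pairwise disjoint. If $Q^0$ itself obeys the stopping inequality, then $\{Q^1_j\}=\{Q^0\}$, in which case \eqref{le3} holds because $2^{-d}\kappa<\measure{A}/\measure{Q^0}\leq\lambda$ forces $\measure{Q^0}<2^d(\lambda/\kappa)\measure{Q^0}$, while \eqref{le1} is immediate from Lemma \ref{3rproperty} (using $\measure{A}\leq\kappa\measure{Q^0}$) and \eqref{le2} is vacuous; so I would settle this degenerate case first and then assume the $Q^1_j$ are proper subcubes of $Q^0$.

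The three numerical assertions then follow from three short observations. \emph{Inequality \eqref{le3}:} disjointness together with the stopping inequality gives $2^{-d}\kappa\sum_j\measure{Q^1_j}<\sum_j\measure{Q^1_j\cap A}\leq\measure{A}\leq\lambda\measure{Q^0}$. \emph{Inequality \eqref{le1}:} maximality forces the dyadic parent $\widehat{Q^1_j}$ (still a subcube of $Q^0$) to violate the stopping inequality, whence $\measure{Q^1_j\cap A}\leq\measure{\widehat{Q^1_j}\cap A}\leq 2^{-d}\kappa\,\measure{\widehat{Q^1_j}}=\kappa\,\measure{Q^1_j}$; since $Q^1_j\cap\{f\notin B_E(c_\kappa(f;Q^0),\thesymbolforalpha_0)\}=Q^1_j\cap A$, Lemma \ref{3rproperty}, applied on $Q^1_j$ with the parameter $\kappa$, the center $c_\kappa(f;Q^0)$ and the radius $\thesymbolforalpha_0$, yields $c_\kappa(f;Q^1_j)\in B_E(c_\kappa(f;Q^0),3\thesymbolforalpha_0)$ for every quasi-optimal center $c_\kappa(f;Q^1_j)$ (if $\thesymbolforalpha_0=0$ one lets the radius tend to $0$ and gets $c_\kappa(f;Q^1_j)=c_\kappa(f;Q^0)$). \emph{Inequality \eqref{le2}:} by the Lebesgue differentiation theorem almost every point of $A$ has dyadic density one in $A$, hence lies in some dyadic subcube of $Q^0$ meeting the stopping inequality and therefore in some $Q^1_j$; thus $\measure{A\setminus\bigcup_jQ^1_j}=0$, so a.e.\ $x\in Q^0\setminus\bigcup_jQ^1_j$ lies outside $A$, i.e.\ $\norm{f(x)-c_\kappa(f;Q^0)}\leq\thesymbolforalpha_0\leq 3\thesymbolforalpha_0$.

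The decomposition \eqref{le4} is then the purely algebraic identity coming from $1_{Q^0}=1_{Q^0\setminus\bigcup_jQ^1_j}+\sum_j1_{Q^1_j}$ and, on each $Q^1_j$, the splitting $f-c_\kappa(f;Q^0)=\bigl(f-c_\kappa(f;Q^1_j)\bigr)+\bigl(c_\kappa(f;Q^1_j)-c_\kappa(f;Q^0)\bigr)$. Passing to norms, applying the triangle inequality, bounding the first summand on $Q^0\setminus\bigcup_jQ^1_j$ by \eqref{le2}, bounding each middle summand on $Q^1_j$ by \eqref{le1}, and recombining $3\thesymbolforalpha_0\,1_{Q^0\setminus\bigcup_jQ^1_j}+3\thesymbolforalpha_0\sum_j1_{Q^1_j}=3\thesymbolforalpha_0\,1_{Q^0}$ yields exactly \eqref{domination}. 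I expect the only genuinely delicate point to be the calibration of the stopping level: it must be small enough, namely $\leq 2^{-d}\kappa$, for the dyadic-parent estimate to feed into Lemma \ref{3rproperty} with precisely the matched parameter $\kappa$, and the unavoidable factor $2^d$ lost at that step is exactly the one showing up in \eqref{le3}; everything else is bookkeeping with indicator functions, measurable sets, and the triangle inequality.
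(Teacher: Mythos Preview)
Your proof is correct and follows the same Calder\'on--Zygmund stopping-time strategy as the paper: both arguments select maximal dyadic subcubes on which the bad set $A=\{x\in Q^0:\norm{f(x)-c_\kappa(f;Q^0)}>\thesymbolforalpha_0\}$ has too large a density, then feed the resulting density bound on $Q^1_j$ into Lemma~\ref{3rproperty} for \eqref{le1}, use disjointness for \eqref{le3}, and obtain \eqref{le4}--\eqref{domination} by elementary algebra. The only substantive differences are bookkeeping and your treatment of \eqref{le2}. On the bookkeeping side, the paper stops at cubes \emph{one of whose children} has density exceeding $\kappa$, so that $Q^1_j$ itself automatically has density at most $\kappa$ (else its parent would be selected); you stop at cubes with density exceeding $2^{-d}\kappa$ and pass to the parent to recover the same $\kappa$-bound on $Q^1_j$. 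These yield different collections in general but the same estimates. For \eqref{le2}, the paper shows that every dyadic subcube $Q\ni x$ of $Q^0$ has density at most $\kappa$, applies Lemma~\ref{3rproperty} to get $\norm{c_\kappa(f;Q^0)-c_\kappa(f;Q)}\leq 3\thesymbolforalpha_0$, and then invokes the vector-valued Fujii Lemma (Lemma~\ref{fuji}) to pass to the limit $c_\kappa(f;Q)\to f(x)$. Your argument is more direct: the classical Lebesgue density theorem applied to the scalar set $A$ already forces almost every point of $A$ into some $Q^1_j$, giving $\norm{f(x)-c_\kappa(f;Q^0)}\leq\thesymbolforalpha_0$ on the complement---a sharper bound, obtained without any vector-valued differentiation.
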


 \begin{proof}
 
Since the function $f$ is fixed and since the cubes  in the proof are indexed, we suppress "f" in the notation and we refer to the cubes by their indices by denoting $c^0_\kappa:=c_\kappa(f;Q^0)$, $c^1_{\kappa\,j}:=c_\kappa(f;Q^1_j)$, $\thesymbolforalpha^0_\lambda(c^0_\kappa):=\thesymbolforalpha_\lambda(f-c_\kappa(f;Q^0);Q^0)$, and $\omega^0_\lambda:=\omega_\lambda(f;Q^0)$. 

The idea behind the lemma is as follows. We want to decompose the function iteratively with respect to collections of subcubes using the notion of a quasi-optimal center of oscillation $c_\kappa(f;Q)$ and the notion of the least oscillatory bound $\thesymbolforalpha_\lambda(f-c_\kappa(f;Q);Q)$ about a quasi-optimal center of oscillation. 
Let $\{Q^1_j\}$ be a collection of pairwise disjoint dyadic subcubes of $Q^0$. By adding and subtracting we can decompose the function as 
\begin{equation}
\label{decompositioneq}
\underbrace{(f-c_{\kappa}^0)1_{Q^0}}_{T_1}=\underbrace{(f-c_{\kappa}^0)1_{Q^0\setminus\bigcup_{j}Q^{1}_j}}_{=:T_2}+\underbrace{\sum_{j}(c_{\kappa\,j}^1-c_{\kappa}^0)1_{Q^{1}_j}}_{=:T_3}+\underbrace{\sum_{j}(f-c_{\kappa\,j}^1)1_{Q^{1}_j}}_{=:T_4}.
\end{equation}

Observe that the term $T_4$ has the same form as the term $T_1$. Hence we can iterate the equation. We want to control the norm of the terms $T_2$ and $T_3$ by the optimal oscillatory bound $\omega_\lambda(f;Q^0)$ and we want to control the measure of the support of the term $T_4$, in order to make the iteration converge. The question is how to choose the collection. 

By definition, the estimate $\norm{f-c_\kappa^0}\leq\thesymbolforalpha^0_\lambda(c_\kappa^0)$ fails on at most a $\lambda$-portion of $Q^0$. Hence it is natural to consider dyadic subcubes of $Q^0$ such that the estimate fails on more than a $\kappa$-portion. This in mind we define $\{Q^1_j\}$ to be the (possibly empty) collection of the maximal (with respect to the set inclusion) dyadic subcubes $Q^1_j$ of $Q^0$  such that for at least one child $Q^1_{j\,(\text{child})}$ of each $Q^1_j$ we have
\begin{equation}
\label{maximality2}
\measure{Q^1_{j\,(\text{child})}\cap\{\norm{f-c^0_\kappa }>\thesymbolforalpha^0_\lambda(c^0_\kappa)\}}>\kappa\measure{Q^1_{j\,(\text{child})}}.
\end{equation}
By maximality and the nestedness of dyadic cubes the collection $\{Q^1_j\}$ is pairwise disjoint. 

Next we consider the norm estimates \eqref{grp}.  First we check the inequality \eqref{le1}. Consider $Q^1_j$. By maximality $Q^1_j$ itself does not satisfy the inequality \eqref{maximality2}. Therefore it satisfies the opposite inequality
\be
\abs{Q^1_j\cap\{\norm{f-c^0_\kappa }>\thesymbolforalpha^0_\lambda(c^0_\kappa)\}}\leq\kappa\abs{Q^1_j}.
\ee 
Hence, by Lemma \ref{3rproperty}, for each $c^1_{\kappa\,j}$ we have
\be 
\norm{c^0_\kappa-c^1_{\kappa\,j}}\leq 3 \thesymbolforalpha^0_\lambda(c^0_\kappa).
\ee
Next we check the inequality \eqref{le2}. Consider $x\in  Q^0\setminus\bigcup Q^1_j$. Let $Q$ be a dyadic subcube of $Q^0$ containing the point $x$. If $Q$ satisfies the inequality \eqref{maximality2}, then by maximality $Q\subset Q^1_{j}$ for some $Q^1_j$, which implies that $x\in\bigcup Q^1_j$. This is a contradiction. Hence $Q$ satisfies the opposite inequality
\be\measure{Q\cap\{\norm{f-c^0_\kappa }>\thesymbolforalpha^0_\lambda(c^0_\kappa)\}}\leq\kappa\measure{Q}.
\ee
Therefore, by Lemma \ref{3rproperty}, for every dyadic subcube $Q$ of $Q^0$ containing $x$ and for every  $c_\kappa(f;Q)$ 
we have
\be\norm{c^0_\kappa-c_\kappa(f,Q)}\leq 3 \thesymbolforalpha^0_\lambda(c^0_\kappa).
\ee 
 Hence, by Fujii's Lemma, for almost every $x$ we have
\be
\norm{c^0_\kappa-f(x)}=\lim_{\substack{Q \text{ a cube}\\Q\ni x,  \abs{Q}\to 0}}\norm{c^0_\kappa-c_\kappa(f;Q)}\leq 3 \thesymbolforalpha^0_\lambda(c^0_\kappa).
\ee

Next we consider the measure estimate \eqref{le3}. Let $P(x)$ denote the property "$x$ satisfies the inequality $\norm{f(x)-c_\kappa^0}\leq\thesymbolforalpha^0_\lambda(c_\kappa^0)$". By the definition of the collection, for each $Q^1_j$ there is a child $Q^1_{j\,(\text{child})}$ such that 
\begin{equation}
\label{measureinequality1}
\kappa\abs{Q^1_{j\,(\text{child})}}<\measure{Q^1_{j\,(\text{child})}\cap\left\{x\in\rn : P(x) \text{ fails} \right\}}.
\end{equation}
By the definition of $\thesymbolforalpha^0_\lambda(c_\kappa^0)$ we have
\begin{equation}
\label{measureinequality2}
\measure{Q^0\cap\left\{x\in\rn : P(x) \text{ fails} \right\}}\leq\lambda\measure{Q}.
\end{equation}
The inequalities \eqref{measureinequality1} and \eqref{measureinequality2} together with the facts that $Q^1_{j\,(\text{child})}\subset Q^0$, the cubes $Q^1_{j\,(\text{child})}$ are pairwise disjoint, and $\measure{Q^1_j}=2^d \measure{Q^1_{j\,(\text{child})}}$ imply that
\be
\kappa 2^{-d} \sum_j \measure{Q^1_j} \leq \measure{\bigcup Q^1_{j(\text{child})}\cap\left\{x\in\rn : P(x) \text{ fails} \right\}} \leq \measure{Q^0\cap\left\{x\in\rn : P(x) \text{ fails} \right\}}\leq \lambda \measure{Q}.
\ee 

Finally, we observe that the pointwise domination almost everywhere  \eqref{domination} is obtained from the pointwise decomposition \eqref{decompositioneq} by the triangle inequality,

\begin{eqnarray*}
\norm{f-c_{\kappa}^0}1_{Q^0}&\leq& \underbrace{\norm{f-c_{\kappa}^0}}_{\leq 3\thesymbolforalpha^0_\lambda(c^0_\kappa)}1_{Q^0\setminus\bigcup_{j}Q^{1}_j}+\sum_{j}\underbrace{\norm{c_{\kappa\,j}^1-c_{\kappa}^0}}_{\leq 3\thesymbolforalpha^0_\lambda(c^0_\kappa)}1_{Q^{1}_j}+\sum_{j}\norm{f-c_{\kappa\,j}^1}1_{Q^{1}_j}\\
&\leq & 3\thesymbolforalpha^0_\lambda(c^0_\kappa)\underbrace{(1_{Q^0\setminus\bigcup_{j}Q^{1}_j}+\sum_{j}1_{Q^1_j})}_{1_{Q^0}}+\sum_{j}\norm{f-c_{\kappa\,j}^1}1_{Q^{1}_j}.\qedhere
\end{eqnarray*}

\end{proof}

\begin{thm}[Vector-valued generalization of Lerner's local oscillation decomposition formula]\label{lernersformulaend} Suppose that $(E,\norm{\,\cdot\,})$ is a Banach space. Let $\nu\in(0,1)$. 
Suppose that $f:\rn\to E$ is a strongly measurable function. Let $Q^0$ be a cube. 

Then there exists a collection $\cs$ of dyadic subcubes of $Q^0$  such that the collection $\cs$ is pairwise nearly disjoint with the parameter $\nu$ and for almost every $x\in Q^0$ and for every quasi-optimal center of oscillation $c_{1/4}(f;Q^0)$ we have
\be
\norm{f(x)-c_{1/4}(f;Q^0)}\leq 12\sum_{Q\in\cs}\omega_{(1-\nu)2^{-d-2}}(f;Q)1_{Q}(x).
\ee
\end{thm}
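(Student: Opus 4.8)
The plan is to iterate Lemma~\ref{keylemma} starting from $Q^0$, generating a tree of dyadic cubes, and then to repackage the resulting telescoping sum as a single sum over a pairwise nearly disjoint collection. I would fix the parameters $\kappa=\tfrac14$ and $\lambda=(1-\nu)\kappa=(1-\nu)2^{-2}$, so that the measure bound \eqref{le3} in Lemma~\ref{keylemma} reads $\sum_j\abs{Q^1_j}\leq 2^d\tfrac{\lambda}{\kappa}\abs{Q^0}=2^d(1-\nu)\abs{Q^0}$; note $(1-\nu)2^{-d-2}$ is exactly $2^{-d}\lambda$, which will reconcile the $\omega$-index in the statement with the $\rho_\lambda$ appearing in \eqref{domination} once one passes from a cube to its children. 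More precisely, first I would apply Lemma~\ref{keylemma} to $Q^0$ to obtain a pairwise disjoint family $\mathcal{Q}_1=\{Q^1_j\}$ and the pointwise domination
\be
\norm{f-c_{1/4}(f;Q^0)}1_{Q^0}\leq 3\rho_\lambda(f-c_{1/4}(f;Q^0);Q^0)1_{Q^0}+\sum_j\norm{f-c_{1/4}(f;Q^1_j)}1_{Q^1_j},
\ee
and then recurse: apply Lemma~\ref{keylemma} to each $Q^1_j$, obtaining families $\mathcal{Q}_2$, and so on, defining $\mathcal{Q}_m$ as the union over all $Q\in\mathcal{Q}_{m-1}$ of the stopping children of $Q$, with $\mathcal{Q}_0=\{Q^0\}$. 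Set $\cs:=\bigcup_{m\geq 0}\mathcal{Q}_m$.

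The second step is to check convergence and extract the clean bound. Iterating \eqref{domination} $M$ times gives
\be
\norm{f-c_{1/4}(f;Q^0)}1_{Q^0}\leq 3\sum_{m=0}^{M-1}\sum_{Q\in\mathcal{Q}_m}\rho_\lambda(f-c_{1/4}(f;Q);Q)1_Q+\sum_{Q\in\mathcal{Q}_M}\norm{f-c_{1/4}(f;Q)}1_Q.
\ee
By \eqref{le3} the total measure of $\mathcal{Q}_M$ is at most $(2^d(1-\nu))^M\abs{Q^0}$; since $2^d(1-\nu)$ may exceed $1$, this crude bound is not enough, so instead I would use that for a.e.\ $x\in Q^0$ either $x$ lies in only finitely many generations (the good case, where the tail term vanishes for $M$ large) or $x\in\bigcap_M\bigcup\mathcal{Q}_M$, a set which I claim has measure zero. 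To see the claim, Fujii's Lemma (Lemma~\ref{fuji}) forces $c_{1/4}(f;Q)\to f(x)$ along cubes shrinking to $x$; combined with \eqref{le2}, on $\bigcap_M\bigcup\mathcal{Q}_M$ one would get $\rho_\lambda(f-c_{1/4}(f;Q);Q)\to 0$ while $x$ keeps being separated off at rate $>\kappa$ in the stopping children, which is contradictory — this is essentially the standard Calderón--Zygmund stopping-time argument, so the exceptional set is null. Thus a.e.\ the tail term vanishes and we get
\be
\norm{f(x)-c_{1/4}(f;Q^0)}1_{Q^0}(x)\leq 3\sum_{Q\in\cs}\rho_\lambda(f-c_{1/4}(f;Q);Q)1_Q(x)\quad\text{a.e.}
\ee

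The third step is to replace $\rho_\lambda(f-c_{1/4}(f;Q);Q)$ by $4\,\omega_\lambda(f;Q)$ using Lemma~\ref{keylemma2} with $\lambda=\kappa=\tfrac14$ (here the two parameters coincide, so Lemma~\ref{keylemma2} gives $\rho_\lambda(f-c_{1/4}(f;Q);Q)\leq 4\omega_\lambda(f;Q)$ directly; alternatively one just uses the definition of a quasi-optimal center, which already gives the factor $2$), and then to observe that for each stopping cube $Q\in\mathcal{Q}_m$ with $m\geq 1$, $Q$ is a child of a cube in $\mathcal{Q}_{m-1}$, so $\abs{Q}=2^{-d}\abs{\widetilde Q}$ for its parent $\widetilde Q$ and hence the relevant oscillation is really controlled at the scale $(1-\nu)2^{-d-2}$ as in the statement — more directly, one re-runs the whole iteration tracking which $\omega$-index is actually needed and finds $\omega_{(1-\nu)2^{-d-2}}(f;Q)$ suffices because the defining inequality \eqref{maximality2} is tested on \emph{children}. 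Combining the constants $3\cdot 4=12$ yields the asserted bound. Finally, for pairwise near-disjointness: the tree $\cs$ is almost a partition, and I would take $E(Q):=Q\setminus\bigcup\{Q'\in\cs:Q'\subsetneq Q\}$; the measure bound \eqref{le3} applied at $Q$ gives $\abs{\bigcup_{Q'\text{ stopping child of }Q}Q'}\leq 2^d(1-\nu)\abs{Q}$... which is again too weak if $2^d(1-\nu)>1$.

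\textbf{The main obstacle} is precisely this: one must arrange the stopping condition so that $E(Q)$ occupies at least a $\nu$-fraction of $Q$. I expect the fix is to note that the children $Q^1_j$ of $Q$ in the construction satisfy not just \eqref{le3} but in fact arise from a genuine Calderón--Zygmund selection whose \emph{union over the immediate next generation inside $Q$} has measure $\leq(1-\nu)\abs{Q}$ — this requires choosing the parameters as $\kappa=\tfrac14$, $\lambda=(1-\nu)/4$ (so $2^d\lambda/\kappa$ is replaced, after the child/parent scaling factor $2^{-d}$ is accounted for, by exactly $(1-\nu)$), giving $\abs{\bigcup_j Q^1_j\cap Q}\leq(1-\nu)\abs{Q}$ hence $\abs{E(Q)}\geq\nu\abs{Q}$, which is the definition of pairwise nearly disjoint with parameter $\nu$ (Definition~\ref{nearlydisjoint}), with the disjointness of the $E(Q)$ immediate from nestedness. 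Reconciling the exact constant bookkeeping between the $2^d$ factors, the child-vs-parent scaling, and the indices $\lambda$ versus $(1-\nu)2^{-d-2}$ is the one genuinely fiddly point; everything else is a routine iteration of Lemma~\ref{keylemma}.
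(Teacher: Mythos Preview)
Your overall strategy---iterate Lemma~\ref{keylemma}, convert $\rho_\lambda$ to $4\,\omega_\lambda$ via Lemma~\ref{keylemma2}, and take $E(Q)=Q\setminus\bigcup\{Q'\in\cs:Q'\subsetneq Q\}$ for near-disjointness---is exactly the paper's approach, and the constant $3\cdot 4=12$ is right. The one real gap is your choice of the parameter $\lambda$.

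You set $\lambda=(1-\nu)/4$ and then try to conjure an extra factor $2^{-d}$ from ``child/parent scaling'' to repair both the near-disjointness estimate and the oscillation index. There is no such rescue: the factor $2^d$ in \eqref{le3} already comes from the child/parent relationship inside the proof of Lemma~\ref{keylemma}, so with your $\lambda$ the stopping children genuinely occupy up to $2^d(1-\nu)\abs{Q}$, which may exceed $\abs{Q}$. Consequently $\abs{E(Q)}\geq\nu\abs{Q}$ fails, and your Fujii-based argument for the tail does not work either: without geometric decay of $\abs{\bigcup\mathcal{Q}_M}$ the set $\bigcap_M\bigcup\mathcal{Q}_M$ need not be null, and the heuristic ``$x$ keeps being separated off at rate $>\kappa$'' does not produce a contradiction.

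The fix, which is what the paper does, is simply to take $\lambda=(1-\nu)2^{-d-2}$ and $\kappa=\tfrac14$ from the start. Then $2^d\lambda/\kappa=(1-\nu)$, so \eqref{le3} gives $\sum_j\abs{Q^1_j}\leq(1-\nu)\abs{Q}$ directly; this yields both $\abs{E(Q)}\geq\nu\abs{Q}$ and the geometric decay $\abs{\Omega^k}\leq(1-\nu)^k\abs{Q^0}$, so the tail vanishes for a.e.\ $x$ with no appeal to Fujii. The oscillation index in the domination is then automatically $\lambda=(1-\nu)2^{-d-2}$, matching the statement with no further reconciliation. Finally, Lemma~\ref{keylemma2} is applied with these parameters $\lambda<\kappa$ (not $\lambda=\kappa=\tfrac14$ as you wrote) to obtain $\rho_\lambda(f-c_{1/4}(f;Q);Q)\leq 4\,\omega_\lambda(f;Q)$.
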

\begin{rem} The collection $\cs$ in the statement of Theorem \ref{lernersformulaend} is in fact pairwise nearly disjoint with $E(Q):=Q\setminus\bigcup_{Q' : Q'\subsetneq Q} Q'$ as the set $E(Q)$ in the definition of a pairwise nearly disjoint collection, Definition \ref{nearlydisjoint}. 
\end{rem}
\begin{proof} The theorem is obtained by iterating Lemma \ref{keylemma}. Lerner's Formula for real-valued measurable functions \cite{lerner2010,arXiv:1202.1860} is obtained by using a similar approach. However, whereas Lerner first iterates the equality \eqref{le4} and then estimates the resulting equality, we first estimate the equality \eqref{le4} 
and then iterate the resulting inequality \eqref{domination}. As observed in \cite{hytonen2012}, in this way we avoid introducing the dyadic maximal operator
$$
M^{\#,d}_{\lambda;Q}f(x):=\sup_{\substack{ Q'\subset Q,\\Q' \text{ dyadic cube}}} 1_{Q'}(x)\omega_\lambda(f;Q').
$$

In what follows we work through the details of the iteration in the vector-valued case.  Since the function $f$ is fixed for the proof, we suppress $"f"$ in the notation by denoting $c_\kappa(Q):=c_\kappa(f;Q)$ and $\omega_\lambda(Q):=\omega_\lambda(f;Q)$. For each $k\in\mathbb{N}$ we define recursively a collection $\cs^k$ of dyadic cubes as follows:
\begin{enumerate}
\item[(R1)] Let $\cs^0=\{Q^0\}$. 
\item[(R2)] By using Lemma \ref{keylemma2} and Lemma \ref{keylemma} with the parameters  $\lambda=2^{-d-2}(1-\nu)$ and $\kappa=2^{-2}$ we have that for every cube $Q\in \cs^k$ there exists a collection $\cs^{k+1}(Q)$ of dyadic subcubes of $Q$ and a null set $N^{k+1}(Q)\subset Q$ such that the cubes $Q' \in\cs^{k+1}(Q)$ are pairwise disjoint, 
$$
\sum_{Q' \in\cs^{k+1}(Q)}  \measure{Q'}\leq (1-\nu)\measure{Q},
$$
and for every $x\in Q\setminus N^{k+1}(Q)$
$$
\norm{f-c_{\kappa}(Q)}1_{Q}(x)\leq 12\, \omega_\lambda(Q) 1_{Q}(x)+\sum_{Q' \in\cs^{k+1}(Q)}\norm{f-c_{\kappa}(Q')}1_{Q'}(x).
$$ Define the collection $\cs^{k+1}$ by setting $\cs^{k+1}:=\bigcup_{Q\in\cs^k} \cs^{k+1}(Q)$. Define the null set $N^{k+1}$ by setting $N^0:=\emptyset $ and $N^{k+1}:=\bigcup_{Q\in\cs^k} N^{k+1}(Q)$ for $k\geq0$.
\end{enumerate}
We make the following observations, which follow by induction in $K$ from the recursive definition.
\begin{enumerate}
\item[(O1)]For each $K\in\mathbb{N}$ and for every $x\in Q^0\setminus \bigcup_{k=0}^K N^{k}$ we have 
\begin{equation}
\label{kdomination2}
\norm{f-c_{\kappa}(Q^0)}1_{Q^0}(x)\leq 12\, \sum_{Q\in \bigcup_{k=0}^K\cs^k}\omega_\lambda(Q) 1_{Q}(x)+\underbrace{\sum_{Q\in \cs^{K+1}}\norm{f-c_{\kappa}(Q)}1_{Q}(x)}_{=:R_K(x)}.
\end{equation}

Let $\Omega^k:=\bigcup_ {Q\in\cd^k} Q$. 

\item[(O2)] For each $k\in\mathbb{N}$ the cubes $Q\in \cs^k$ are pairwise disjoint.
\item[(O3)] For each $k\in\mathbb{N}$ we have $\Omega^k\supset \Omega^{k+1}.$
\item[(O4)] For each $Q\in\cd^{k}$ we have $\measure{Q\cap \Omega^{k+1}}\leq (1-\nu)\measure{Q}$.
\end{enumerate}

Next we check that the collection $\cs:=\bigcup_{k=0}^\infty \cs^k$ is pairwise nearly disjoint with the parameter $\nu$, in the sense of Definition \ref{nearlydisjoint}. For each $Q\in\cs^k$ we define $E(Q):=Q\setminus \Omega^{k+1}$. By the observation (O2), if $Q'\in\cs^k$ and $Q\in\cs^k$ are such that $Q'\neq Q$, then $Q'\cap Q=\emptyset$. By the observation (O3), if $k'>k$, $Q'\in\cs^{k'}$, and $Q\in\cs^k$, then $Q'\cap E(Q)=\emptyset$. By observation (O4), $\measure{E(Q)}\geq \nu \measure{Q}$. Hence the collection $\cs$ is pairwise nearly disjoint with the parameter $\nu$.

Next we check that the remainder $R_K$ of the $K$th iteration, which is the rightmost term in the inequality \eqref{kdomination2}, vanishes at almost every point whenever $K$ is sufficiently large. Note that because of the observations (O2), (O3), and (O4), we have that
$$
\measure{\Omega^k}\underset{(O3)}{=}\measure{\Omega^k\cap\Omega^{k-1}}\underset{(O2)}{=}\sum_{Q\in\cs^{k-1}}\measure{\Omega^k\cap Q} \underset{(O4)}{\leq} (1-\nu)\sum_{Q\in\cs^{k-1}} \measure{Q}=(1-\nu)\measure{\Omega^{k-1}}.$$
Hence $\measure{\Omega^k}\leq (1-\nu)\measure{\Omega^{k-1}}\leq  (1-\nu)\left((1-\nu)\measure{\Omega^{k-2}}\right)\leq \ldots \leq (1-\nu)^k\measure{Q^0}$. Therefore $$\measure{\bigcap_{k=0}^\infty \Omega^k}=0.$$
Note that the remainder $R_K$  of the $K$th iteration is supported on $\Omega_K$. Let $N:=\left(\bigcup_{k=0}^\infty N^k\right) \cup \left(\bigcap_{k=0}^\infty \Omega^k\right)$. Let $x\in Q^0 \setminus N$. Then $x \notin \Omega^{K_x}$ for some $K_x\in\mathbb{N}$. Since $\Omega^K\supset \Omega^{K+1}$, we have that $x \notin \Omega^{K}$ for every $K\geq K_x$.  Since $R_K$ is supported on $\Omega_K$, we have that $R_K(x)=0$ for every $K\geq K_x.$  Now, by the inequality \eqref{kdomination2}, we have that
$$
\norm{f-c_{\kappa}(Q^0)}1_{Q^0}(x)\leq 12\, \sum_{Q\in \bigcup_{k=0}^\infty\cs^k}\omega_\lambda(Q) 1_{Q}(x).\qedhere
$$
\end{proof}
\bibliographystyle{plain}
\bibliography{references}
\end{document}